\newtheoremstyle{rem}{1.3ex}{1.3ex}{\rmfamily}{}
{\itshape\rmfamily}{}{1.5ex}{}
\newtheorem{theorem}{Theorem}[section]
\newtheorem{lemma}[theorem]{Lemma}
\newtheorem{prop}[theorem] {Proposition}
\theoremstyle{definition}
\newtheorem{remark}[theorem] {Remark}
\renewcommand{\section}{\secdef\sct\sect}
\newcommand{\sct}[2][default]{\refstepcounter{section}
\setcounter{equation}{0}
\vspace{0.5cm}
\centerline{ \large
\scshape \arabic{section}.\ #1}
\vspace{0.3cm}}
\newcommand{\sect}[1]{
\vspace{0.5cm}
\centerline{\large\scshape #1}
\vspace{0.3cm}}
\DeclareMathOperator{\arctanh}{arctanh}
\renewcommand{\subsection}{\secdef \subsct\sbsect}
\newcommand{\subsct}[2][default]{\refstepcounter{subsection}
\nopagebreak
\vspace{0.5\baselineskip}
{\flushleft\bf \arabic{section}.\arabic{subsection}~\bf #1  }
\nopagebreak}
\newcommand{\sbsect}[1]{\vspace{0.1cm}\noindent
{\bf #1}\vspace{0.1cm}}
\newcommand{\Ll}{\mathcal{L}}
\newcommand{\F}{{\mathcal {F}}}
\newcommand{\Zz}{\mathbb{Z} }
\newcommand{\Oo}{{\mathcal {O}}}
\newcommand{\G}{{\mathcal {G}}}
\newcommand{\Ei}{\mathbf{1}}
\newcommand{\B}     {\mathcal{B}}
\newcommand{\R}     {\mathbb{R}}
\newcommand{\N}     {\mathbb{N}}
\newcommand{\Ws}   {\mathbb{P}}
\newcommand{\E}     {\mathbb{E}}
\newcommand{\V}     {\mathbb{V}}
\def\1{{\mathchoice {1\mskip-4mu\mathrm l}
                    {1\mskip-4mu\mathrm l}
                    {1\mskip-4.5mu\mathrm l} {1\mskip-5mu\mathrm l}}}
\DeclareMathOperator{\sgn}{sign}
\begin{document}
\renewcommand{\proofname}{Proof}
\title[On rates of convergence for the overlap in the Hopfield model]{\large
On rates of convergence\\\vspace{2mm} for the overlap\\\vspace{5mm} in the Hopfield model}

\author[Peter Eichelsbacher and Bastian Martschink]{}
\maketitle
\thispagestyle{empty}
\vspace{0.2cm}

\centerline{\sc Peter Eichelsbacher\footnote{Ruhr-Universit\"at Bochum, Fakult\"at f\"ur Mathematik,
NA 3/66, D-44780 Bochum, Germany, {\tt peter.eichelsbacher@rub.de}} and Bastian Martschink\footnote{Hochschule Bonn-Rhein Sieg, Fachbereich 03,
B 295, D-53757 Sankt Augustin, Germany, {\tt bastian.martschink@h-brs.de} \\The authors have been supported by Deutsche Forschungsgemeinschaft via SFB/TR 12.}
}

\vspace{2 cm}


\begin{quote}
{\small {\bf Abstract:} 
We consider the Hopfield model with $n$ neurons and an increasing number $p=p(n)$ of randomly chosen patterns and use Stein's method
to obtain rates of convergence for the central limit theorem of overlap parameters, which holds for every fixed choice of the overlap parameter
for almost all realisations of the random patterns.}
\end{quote}


\bigskip\noindent
{\bf AMS 2000 Subject Classification:} Primary 60F05; Secondary 82B20, 82B26.

\medskip\noindent
{\bf Key words:} Stein's method, exchangeable pairs, Hopfield model, overlap, neural networks.


\setcounter{section}{0}

\bigskip
\bigskip
\bigskip

\section{Introduction}

\subsection{The Hopfield model}
The so-called Hopfield model was introduced by Figotin and Pastur in \cite{Figotin/Pastur:1977} and \cite{Figotin/Pastur:1978} as a model for a spin glass. They studied a class of spin glass models which also included the one with the energy function known today as the Hopfield model, which was also introduced by Hopfield in \cite{Hopfield:1982} in the context of neural networks as a model for an associative memory with $n\in\N$ neurons. Thus Hopfield linked the study of neural networks to the one of spin models. The success of this model was mainly based on this reinterpretation of the model and therefore it may be right to call it the Hopfield model. Being a model for the associate (also termed content-addressable) memory it is not derived directly from a physical or biological system. Roughly speaking, the recognition and/or retrieval of one out of $p\in\N$ stored patterns constitutes the central problem of the model. This means that one wants to store a certain amount of information and perform the quite difficult task to recognize it on the basis of partial or corrupted data, which is not easy for a usual search algorithm.\\
We consider a system of $n\in\N$ neurons. Each neuron can be in one of two possible states, either $-1$ or $1$. We will denote by $\sigma_i\in\{-1,1\}$ the neural activity of the $i^{\text{th}}$ neuron, $i\in\{1,\ldots,n\}$ and thus, in the context of spin systems, $\sigma_i$ would be the spin variable at $i\in\{1,\ldots,n\}$. Thus a spin configuration $(\sigma_1,\ldots,\sigma_n)$ is taken from the set of spin configurations $\{-1,1\}^n$. In general the instantaneous configuration of all the spin variables at a given time describes the state of such a network. Furthermore let $(\Omega,\B, \Ws)$ be an abstract probability space. The model consists of $p\in\N$ stored patterns on this space which will be denoted by $\xi^{\mu}$, $\mu\in\{1,\ldots,p\}$. Thus $\xi^{\mu}=(\xi_1^{\mu},\ldots,\xi_n^{\mu})\in\{-1,1\}^n$ describes the codification of the $\mu^{\text{th}}$ stored pattern. $(\sigma_i)_{i\in\N}$ and $(\xi_i^{\mu})_{i\in\N}$ with $\mu\in\N$ are considered to be random variables and we will assume that the family of random variables $\{\sigma_i,\xi_j^{\mu}\mid i,j,\mu\in\N\}$ is independent. Additionally we assume that the random variables satisfy $\Ws(\sigma_i=\pm 1)=1/2$ and $\Ws(\xi_j^{\mu}=\pm 1)=1/2$. Thus we denote by $\Ws_{\xi}=(\frac{1}{2}\delta_{-1}+\frac{1}{2}\delta_1)^{\otimes \N^2}$ the marginal distribution of the patterns $\xi=(\xi_i^{\mu})_{i,\mu\in\N}$, and similarly, by $\Ws_{\sigma}=(\frac{1}{2}\delta_{-1}+\frac{1}{2}\delta_1)^{\otimes \N}$ the marginal distribution of the spin variables $\sigma=(\sigma_i)_{i\in\N}$. As $n\rightarrow\infty$ $p$ can either be fixed or increasing with $n$. Now let
\begin{align}\label{HHM}
H_n(\sigma,\xi)=-\frac{1}{2n}\sum\limits_{\mu=1}^p\sum\limits_{i,j=1}^n\xi_i^{\mu}\xi_j^{\mu}\sigma_i\sigma_j,\text{   } n\in\N,
\end{align}
denote the Hopfield Hamiltonian. At this point one might notice the spin-flip dynamic
$H_n(-\sigma,\xi)=H_n(\sigma,\xi)$,
showing that the Hopfield model cannot distinguish between a spin configuration and its negative. Governed by this Hamiltonian, \cite{Amit:1985} presented a generalized Glauber single-spin dynamics on the set of spin configurations at finite temperature $1/\beta\in (0,\infty)$, which describes a reversible and irreducible Markov process. The equilibrium distribution of this process is the finite-volume Gibbs measure
\begin{eqnarray}
\text{d}P_{n,\beta,\xi}( \sigma)=\frac{1}{Z_{n,\beta,\xi}}\exp\left(-\beta H_n(\sigma,\xi)\right)\text{d}\Ws_{\sigma}\label{PNBXI},
\end{eqnarray}
where the partition function $Z_{n,\beta,\xi}$ is the appropriate normalization.\\
In the sequel the focus of attention will be on the investigation of the behavior of the so-called overlap under the equilibrium distribution $P_{n,\beta,\xi}$ as $n\rightarrow\infty$. Let \begin{align}\label{xidef}
\xi_i=(\xi_i^{\mu})_{\mu\in\{1,\ldots,p\}}\text{, }i\in\{1,\ldots,n\},
\end{align}
 be the vector consisting of the $i^{\text{th}}$ components of the first $p$ patterns. If $p$ is not constant and grows with $n$, $\xi_i\in\R^p$ still depends on $n$ via the dimension. We define the {\it overlap} by
\begin{align}\label{Hoverlap}
\frac{1}{n}S_n(\sigma,\xi)=\frac{1}{n}\sum\limits_{i=1}^n\xi_i\sigma_i \in\R^p,
\end{align}
with $\xi_i\sigma_i=(\xi_i^1\sigma_i,\ldots,\xi_i^p\sigma_i)^t$. 
With the overlap we obtain a comparison between the spin configuration $\sigma$ and the stored patterns $\xi^{\mu}$, $\mu\in\{1,\ldots,p\}$, meaning that the $\mu^{\text{th}}$ overlap parameter - the $\mu^{\text{th}}$ component of \eqref{Hoverlap} - equals one if and only if $\sigma_i=\xi_i^{\mu}$ for all $i\in\{1,\ldots,n\}$. Definition \eqref{Hoverlap} provides the opportunity to express the Hamiltonian \eqref{HHM} in a more convenient way. It can be rewritten as the quadratic function of the overlap
\begin{align*}
H_n(\sigma,\xi)=-\frac{n}{2}\bigl\Vert \frac{1}{n}S_n(\sigma,\xi)\bigr\Vert^2,
\end{align*}
where $\Vert\cdot\Vert$ denotes the Euclidean norm in $\R^p$. 
If there is no opportunity for confusion we will drop the explicit dependence on $\sigma$ and $\xi$ and write $S_n$ and $H_n$ instead of $S_n(\sigma,\xi)$ and
$H_n(\sigma,\xi)$, respectively. 

In the case $p=1$ the Hopfield model and the Curie-Weiss model are the same apart from a change of variable. 
The Curie-Weiss model is a well-known approximation of the Ising-model. The classical theory of magnetism occupies a central place in the physical literature. It allows the study of the behavior of thermodynamic quantities such as the specific heat, isothermal susceptibility, and magnetization in the neighborhood of the critical point. Because of its relative simplicity and the qualitative correctness of at least some of its predictions, it has been historically important. For our investigation of the Hopfield model we focus on the so called Curie-Weiss equation given by 
\begin{align}\label{CWE}
\beta x=\arctanh(x).
\end{align}
 This equation is also called {\it mean field} or {\it fixed point} equation. Its derivation can for example be found in \cite{Ellis:LargeDeviations}. Of course this equation may have many solutions. Let $x^{\pm}(\beta)$ denote for $\beta>0$ the largest (respectively smallest) solution $x\in (-1,1)$ of \eqref{CWE}. It was shown that $x^{+}(\beta)=-x^{-}(\beta)\neq 0$ for $\beta>\beta_c$, where $\beta_c=1$ is the critical inverse temperature. For $\beta\leq\beta_c$ we have $x^{\pm}(\beta)=0$. This definition of the Curie-Weiss equation can be extended to the case of the external magnetic field with strength $h\neq 0$ yielding
\begin{align}\label{CWEh}
\beta x+h=\arctanh(x).
\end{align}
Here let $x(\beta,h)$ denote the solution of \eqref{CWEh} which satisfies $\sgn(x)=\sgn(h)$.
As we will see these solutions of the Curie-Weiss equation discussed above play an important role when discussing the Hopfield model.
Abbreviate 
\begin{align*}
x^*:=\begin{cases}x^+(\beta),&\text{ if h=0},\\x(\beta,h),&\text{ otherwise}.\end{cases}
\end{align*}
For investigating the behaviour of the overlap, we also extend the notion of the Gibbs measure $P_{n,\beta,\xi}$ given in \eqref{PNBXI} to the case of an external magnetic field $he_l$ with strength $h\neq 0$ in the direction of the $l^{\text{th}}$ unit vector $e_l\in\R^p$. Thus, let
\begin{eqnarray}
\text{d}P_{n,\beta,he_l,\xi}( \sigma)=\frac{1}{Z_{n,\beta,he_l,\xi}}\exp\left(-\beta H_n+\langle S_n,he_l\rangle\right)\text{d}\Ws_{\sigma}\label{PNBHXI},
\end{eqnarray}
where $Z_{n,\beta,he_l,\xi}$ denotes the appropriate normalization.\\
For $\beta >0$ and $h\neq 0$ having the direction of the $l^{\text{th}}$ unit vector $e_l$ it was shown in \cite{Bovier/Gayrard/Picco:1994}  that for $\Ws_{\xi}$-almost all realizations of the patterns $\xi$ and if $p/n\rightarrow 0$ the overlap $\frac{S_n}{n}$ satisfies the {\it law of large numbers} 
\begin{align*}
P_{n,\beta,he_l,\xi} \biggl( \frac{S_n}{n} \in d\nu \biggr) 
\Rightarrow \delta_{\pm x(\beta,h)e_l} (d \nu)\text{ as }n \to \infty.
\end{align*}
The authors in \cite{Bovier/Gayrard/Picco:1994} stated that the condition on $p$ is the weakest possible under which the law of large numbers is satisfied. Note that for $\beta\leq\beta_c=1$ we have $x(\beta,h)=0$ and thus $\delta_0$ is the unique limiting measure in the high-temperature region. For $\beta>1$ it was mentioned that the measures of the law of large numbers are all distinct and they were referred to as so-called {\it extremal measures}. 

The corresponding {\it large deviation principle} (LDP for short) was established in \cite{Bovier/Gayrard:1996}. 
Under the assumption $p(n)/n \to 0$ for almost all $\xi$ the sequence
$(\frac{S_n}{n})_n$ under the Gibbs measure $P_{n, \beta, \xi}$ obeys a LDP with speed $n$ and {\it deterministic} rate function $I$. If the inverse temperature $\beta$ is different from the critical inverse temperature $\beta_c=1$ and $p(n)/n \to \infty$, the overlap parameter multiplied by $n^{\gamma}$ with $1/2<\gamma <1$ obeys a LDP with speed $n^{1 - \gamma}$ and a quadratic rate function, see \cite{Eichelsbacher/Loewe:2004}. The latter result is known as a moderate deviations principle (MDP for short).

On the scale of fluctuations, when analysing the distribution of $\sqrt{n} (S_n/n - x^* e_l)$, the disorder becomes visible. Indeed, for $p(n)/n \to 0$ and $(\beta,h) \not= (1,0)$ the overlap under $P_{n, \beta, \xi}$ satisfies $P_{\xi}$-almost surely a central limit theorem with a covariance matrix which could be expected from the analogy with the Curie-Weiss model and a centering which differs in the case $\beta >0$ or $h \not= 0$ from the naively expected one by a $\xi$-dependent
adjustment, see \cite{Gentz:1996b} and \cite{Bovier/Gayrard:1997}. In this paper we are aiming to give an alternative proof of these central limit theorems
for the overlap parameter under $P_{n, \beta, \xi}$. We will apply Stein's method. This method has emerged as a powerful tool for assessing the quality of
distributional approximations and it is notable for avoiding the use of transforms, and for supplying bounds, such as those of Berry-Esseen quality, on approximation error in the presence of dependence. We will be able to present rates of convergence for central limit theorems for the overlap parameter, which
are optimal for the Hopfield model with a finite number of randomly chosen patterns.
As in the Curie-Weiss model at the critical temperature $(\beta,h)=(1,0)$ the fluctuations are non Gaussian and the limiting distribution has a random component,
see \cite{Gentz/Loewe:1999} and \cite{Talagrand:2001}. Interesting enough the random term occurring in the central limit theorem is no longer present on a moderate deviations scale, where the overlap parameter has to be multiplied be $n^{\gamma}$ with $1/4 < \gamma < 1$: here for certain choices of $p(n)$
the rescaled overlap parameter obeys a MDP with speed $n^{1 - 4 \gamma}$ and a rate function that is basically a fourth power, see \cite{Eichelsbacher/Loewe:2004}. Anyhow, in this paper we do not consider the case $(\beta,h)=(1,0)$.

\subsection{Statement of the main results}
 
{\it General assumption.} From now on we make the assumption that $p=p(n)$, $p\leq n$ is a nondecreasing function of $n$ for all $n\in\N$. 

As in \cite{Gentz:1996} we choose a preferred pattern in two different ways. We consider the unbiased Hamiltonian \eqref{HHM} and investigate the fluctuations under the condition that the overlap is already in a neighbourhood of $x^* e_l$. Alternatively, the preferred pattern can be chosen by introducing the 
magnetic field as in \eqref{PNBHXI}. In the case of \eqref{HHM} with $\beta < \beta_c$ the central limit theorem holds with center zero. Otherwise the limit theorem requires a $\xi$-dependent adjustment of a deterministic centering. Therefore one has to control the influence of the random patterns.
For fixed $\epsilon>0$ we define 
\begin{align}
\alpha&:=\frac{1}{n}\max\biggl\{p,\left(\frac{3\log n}{\log(1+\epsilon)}\right)^4\biggr\},\nonumber\\ \label{epsilonn}
\epsilon_n&:=\sqrt{\alpha}(2+\sqrt{\alpha})(1+\epsilon).
\end{align}
By \cite[Proposition 2.1]{Gentz:1996} we see that the operator norm of
$
\Sigma^n(\xi)=\frac{1}{n}\sum\limits_{i=1}^n\xi_i\xi_i^t-\text{Id}_{\R^p}
$ 
converges to zero for $P_{\xi}$-almost all $\xi$: for $P_{\xi}$-almost all $\xi$, there exists an $n_0(\xi) \in \N$ such that for all $n \geq n_0(\xi)$
\begin{align}\label{AbschPatterns}
\Vert\Sigma^n(\xi)\Vert\leq\epsilon_n.
\end{align}
The following index set depends on the dimension $p$, on the inverse temperature $\beta$, the presence or absence of an external magnetic field $h$ and its
direction $e_l$:
\begin{align}\label{L}
L:=\begin{cases}\{\sgn(h)l\}, &\text{in the case }h\neq 0,\\
\{1\},&\text{in the case }0<\beta<\beta_c\text{ and }h=0,\\
\{-p,\ldots,-1,1,\ldots,p\},&\text{in the case }\beta>\beta_c\text{ and }h=0.\end{cases}
\end{align}
The index set $L$ is used to describe those directions that the overlap favors under the equilibrium measure. 
In $(\beta_c,0)$ the central limit theorem fails (see \cite{Gentz:1996}). Thus we do not need $L$ for these parameters.
The following result is proved in \cite[Proposition 2.3]{Gentz:1996} and is an important step for defining the centering.

\begin{prop}\label{ProPhi}~\\
Let $\beta>0$ and $h\geq 0$ such that $(\beta,h)\neq(\beta_c,0)$ and $l\in\{-p,\ldots,-1,1,\ldots,p\}$. For $\lambda\in\R^p$, we define the $\xi$-dependent function
\begin{align}\label{PHI}
\Phi(\lambda):&=-\frac{1}{2\beta}\left\Vert  \lambda-he_l \right\Vert^2+\frac{1}{n}\sum\limits_{j=1}^n\log\cosh\langle\lambda,\xi_j\rangle.
\end{align}
Then, for all strictly positive $c_1<(1-\beta(1-(x^*)^2))/\beta$, there exists an $r_1>0$, depending on $\beta$, $h$ and $c_1$ only, and for $\Ws_{\xi}$-almost all $\xi$, there exists an $n_1(\xi)\geq n_0(\xi)$, which does not depend on the choice of $l$, such that for all $n\geq n_1(\xi)$ the following assertions hold:
\begin{enumerate}
\item For all $\lambda$ in the closed ball $\overline{B_{r_1}(\arctanh(x^*e_l)})$, the matrix $-D^2\Phi(\lambda)$ is uniformly positive definite in the sense that
\begin{align*}
\langle u,-D^2\Phi(\lambda)u\rangle\geq c_1\Vert u\Vert^2\text{  for all }u\in\R^p.
\end{align*}
\item On the set $\overline{B_{r_1}(\arctanh(x^*e_l)})$, the map $\Phi$ has a unique maximum which is attained in the point $\lambda_l^n(\xi)$ satisfying
\begin{align*}
\vert \lambda_l^n(\xi)-\arctanh(x^*e_l)\vert\leq c_2\epsilon_n
\end{align*}
with $c_2=2|x|/c_1$. In particular, $\lambda_l^n(\xi)=0$ in the case $\beta<\beta_c$ and $h=0$.
\end{enumerate}
\end{prop}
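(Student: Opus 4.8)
The plan is to compute the first two derivatives of $\Phi$, read off the Hessian at the candidate centre, and then use \eqref{AbschPatterns} together with a uniform continuity bound for the Hessian to propagate positivity to a ball; the location of the maximum then follows from strict concavity and a computation of $\nabla\Phi$ at the centre. Throughout I work on the almost-sure event of \eqref{AbschPatterns} and for $n$ large, and I write $\lambda_0:=\arctanh(x^*e_l)$, so that $\langle\lambda_0,\xi_j\rangle=\arctanh(x^*)\langle e_l,\xi_j\rangle$ with $\langle e_l,\xi_j\rangle\in\{-1,1\}$. Differentiating \eqref{PHI} gives
\begin{align*}
\nabla\Phi(\lambda)&=-\tfrac1\beta(\lambda-he_l)+\tfrac1n\sum_{j=1}^n\tanh\langle\lambda,\xi_j\rangle\,\xi_j,\\
-D^2\Phi(\lambda)&=\tfrac1\beta\,\text{Id}-\tfrac1n\sum_{j=1}^n\bigl(1-\tanh^2\langle\lambda,\xi_j\rangle\bigr)\xi_j\xi_j^t.
\end{align*}
Since $1-\tanh^2(\arctanh(x^*)\langle e_l,\xi_j\rangle)=1-(x^*)^2$ and $\tfrac1n\sum_j\xi_j\xi_j^t=\text{Id}+\Sigma^n(\xi)$, evaluating at $\lambda_0$ yields
\[
-D^2\Phi(\lambda_0)=\frac{1-\beta(1-(x^*)^2)}{\beta}\,\text{Id}-(1-(x^*)^2)\,\Sigma^n(\xi).
\]
The prefactor $(1-\beta(1-(x^*)^2))/\beta$ is strictly positive for every $(\beta,h)\neq(\beta_c,0)$ — this is the standard stability property of the relevant solution of \eqref{CWE}/\eqref{CWEh}, i.e.\ $\beta(1-(x^*)^2)<1$, see \cite{Ellis:LargeDeviations} — so by \eqref{AbschPatterns} and $\epsilon_n\to0$ the smallest eigenvalue of $-D^2\Phi(\lambda_0)$ exceeds any prescribed $c_1<(1-\beta(1-(x^*)^2))/\beta$ once $n$ is large.

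To obtain the first assertion on all of $\overline{B_{r_1}(\lambda_0)}$, I would estimate the modulus of continuity of $\lambda\mapsto D^2\Phi(\lambda)$. Using that $t\mapsto1-\tanh^2t$ is globally Lipschitz, say with an absolute constant $C_0$, and Cauchy--Schwarz,
\begin{align*}
\bigl|\langle u,(D^2\Phi(\lambda)-D^2\Phi(\lambda_0))u\rangle\bigr|
&\le C_0\,\tfrac1n\sum_{j=1}^n|\langle\lambda-\lambda_0,\xi_j\rangle|\,\langle u,\xi_j\rangle^2\\
&\le C_0\Bigl(\tfrac1n\sum_{j=1}^n\langle\lambda-\lambda_0,\xi_j\rangle^2\Bigr)^{1/2}\Bigl(\tfrac1n\sum_{j=1}^n\langle u,\xi_j\rangle^4\Bigr)^{1/2},
\end{align*}
where the first factor equals $\langle\lambda-\lambda_0,(\text{Id}+\Sigma^n(\xi))(\lambda-\lambda_0)\rangle^{1/2}\le(1+\epsilon_n)^{1/2}\Vert\lambda-\lambda_0\Vert$ by \eqref{AbschPatterns}, and the second factor is bounded, uniformly over unit vectors $u$, by an absolute constant for $n$ large. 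This last bound on the degree-four empirical functional $\sup_{\Vert u\Vert=1}\tfrac1n\sum_j\langle u,\xi_j\rangle^4$ is the technical heart: a naive discretisation of the unit sphere loses a factor $\sqrt p$, and it is exactly the large-deviation/Borel--Cantelli control needed here that forces $\epsilon_n$ in \eqref{epsilonn} to be built from $\alpha=\tfrac1n\max\{p,(3\log n/\log(1+\epsilon))^4\}$ rather than from $p/n$ alone. Granting it, one may choose $r_1$ (depending only on $\beta,h,c_1$, since $C_0$ and the fourth-moment constant are absolute) so small that this perturbation stays below $\tfrac12\bigl((1-\beta(1-(x^*)^2))/\beta-c_1\bigr)$; then $-D^2\Phi(\lambda)\ge c_1\,\text{Id}$ on $\overline{B_{r_1}(\lambda_0)}$, which is the first assertion, and $\Phi$ is strictly concave there.

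For the second assertion, strict concavity gives at most one maximiser on $\overline{B_{r_1}(\lambda_0)}$, and compactness gives one; call it $\lambda_l^n(\xi)$. To locate it I would control $\nabla\Phi(\lambda_0)$. Using $\tanh(\arctanh(x^*)\langle e_l,\xi_j\rangle)=x^*\langle e_l,\xi_j\rangle$ and $\tfrac1n\sum_j\langle e_l,\xi_j\rangle\xi_j=(\text{Id}+\Sigma^n(\xi))e_l$, and observing that in each of the three cases of \eqref{L} the fixed-point equation gives $-\tfrac1\beta(\lambda_0-he_l)=-x^*e_l$, one gets $\nabla\Phi(\lambda_0)=x^*\Sigma^n(\xi)e_l$, hence $\Vert\nabla\Phi(\lambda_0)\Vert\le|x^*|\,\epsilon_n$ by \eqref{AbschPatterns}. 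Combining with the Taylor/concavity estimate $\Phi(\lambda)\le\Phi(\lambda_0)+\langle\nabla\Phi(\lambda_0),\lambda-\lambda_0\rangle-\tfrac{c_1}{2}\Vert\lambda-\lambda_0\Vert^2$ at $\lambda=\lambda_l^n(\xi)$, where $\Phi(\lambda_l^n(\xi))\ge\Phi(\lambda_0)$, yields $\tfrac{c_1}{2}\Vert\lambda_l^n(\xi)-\lambda_0\Vert^2\le\Vert\nabla\Phi(\lambda_0)\Vert\,\Vert\lambda_l^n(\xi)-\lambda_0\Vert$, i.e.\ $\Vert\lambda_l^n(\xi)-\lambda_0\Vert\le\tfrac{2|x^*|}{c_1}\,\epsilon_n=c_2\epsilon_n$; for $n$ large this is $<r_1$, so the maximiser is interior and is the unique critical point of $\Phi$ in the ball. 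When $\beta<\beta_c$ and $h=0$ one has $x^*=0$, so $\lambda_0=0$, $\nabla\Phi(0)=0$ and therefore $\lambda_l^n(\xi)=0$. Finally, every ingredient above — $\Vert\Sigma^n(\xi)\Vert$, the fourth-moment bound, $\Vert\Sigma^n(\xi)e_l\Vert\le\Vert\Sigma^n(\xi)\Vert$, $r_1$, $c_1$ and $x^*$ — is independent of $l$, so a single $n_1(\xi)\ge n_0(\xi)$ works for all $l\in\{-p,\dots,-1,1,\dots,p\}$.

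The step I expect to be the genuine obstacle is the dimension-free Hessian continuity bound of the second paragraph: keeping $r_1$ bounded away from $0$ independently of the growing dimension $p$ requires the almost-sure, uniform control of the fourth-order empirical quantity $\sup_{\Vert u\Vert=1}\tfrac1n\sum_j\langle u,\xi_j\rangle^4$, which is precisely what the shape of $\epsilon_n$ in \eqref{epsilonn} is tailored to supply. All remaining steps are Taylor expansion around $\lambda_0$, the Curie--Weiss identity and \eqref{AbschPatterns}; the statement itself, and the requisite random-matrix estimates, are \cite[Propositions 2.1 and 2.3]{Gentz:1996}.
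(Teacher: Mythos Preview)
The paper does not give its own proof of this proposition; immediately before the statement it says the result ``is proved in \cite[Proposition 2.3]{Gentz:1996}'', and no argument is supplied here. So there is nothing in the present paper to compare your sketch against.

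Your reconstruction is nonetheless a correct outline of the standard argument. The Hessian and gradient at $\lambda_0$ are computed correctly; the identity $\nabla\Phi(\lambda_0)=x^*\Sigma^n(\xi)e_l$ combined with the strict-concavity Taylor estimate gives exactly the constant $c_2=2|x^*|/c_1$ appearing in part~(b); and the special case $\beta<\beta_c$, $h=0$ follows as you say from $x^*=0$ and $\nabla\Phi(0)=0$. You also correctly isolate the one step that genuinely goes beyond \eqref{AbschPatterns}: to keep $r_1$ independent of the growing dimension $p$ in part~(a) one needs a uniform-in-$u$ almost-sure bound on $\langle u,(D^2\Phi(\lambda)-D^2\Phi(\lambda_0))u\rangle$, and a naive estimate via $|\langle\lambda-\lambda_0,\xi_j\rangle|\le\sqrt{p}\,\Vert\lambda-\lambda_0\Vert$ loses a factor $\sqrt{p}$. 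Some dimension-free random-matrix control of this type is indeed the crux of Gentz's proof, and the shape of $\epsilon_n$ in \eqref{epsilonn} (with the $\log n$ contribution to $\alpha$) is consistent with the net/Borel--Cantelli argument you describe; you are right to flag this as the step one cannot simply read off from \eqref{AbschPatterns}.
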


\begin{remark}
The function $\Phi$ defined in \eqref{PHI} is sometimes called {\it quenched free-energy} of the Hopfield model.
If the realizations $\xi_1,\ldots,\xi_n$ take all possible values with the same frequency and $n$ is a multiple of $2^p$, then $\lambda_l^n(\xi)=\arctanh(x^*e_l)$. 
\end{remark}
The random centering is given by
\begin{align}\label{XLN}
x_l^n(\xi)&=\frac{1}{\beta}(\lambda_l^n(\xi)-he_l)
\end{align}
with the help of $\lambda_l^n(\xi)$ for $l\in\{-p,\ldots,-1,1,\ldots,p\}$. Even if it is not indicated by the name it remains important to notice that \eqref{XLN} still depends on $\beta$ and $h$. We have to extend this definition because \eqref{XLN} is only defined for $\Ws_{\xi}$-almost all $\xi$ and $n\geq n_1(\xi)$. We assign
\begin{align}\label{XLN2}
x_l^n(\xi)&=\frac{1}{\beta}(\arctanh x^*-h)e_l=x^*e_l
\end{align}
whenever $\lambda_l^n(\xi)$ is not defined. The second equality of \eqref{XLN2} is due to the Curie-Weiss equation \eqref{CWEh}. Using Proposition \ref{ProPhi} we see that for $\beta<\beta_c$ the centering satisfies $x_l^n(\xi)=0$, while for $\beta>\beta_c$ the centering is close to the limiting point $x^*$ in the sense that
\begin{align}\label{ConvCentring}
\Vert x_l^n(\xi)-x^*e_l\Vert\leq \frac{1}{\beta}c_2\epsilon_n\rightarrow 0
\end{align}
as $n\rightarrow\infty$ for some constant $C$ and $\epsilon_n$ defined in \eqref{epsilonn}. 

From now on we will write random vectors in $\mathbb{R}^d$ in the form $w=(w_1,\ldots,w_d)^t$, where $w_i$ are 
$\mathbb{R}$-valued variables for $i=1,\ldots,d$. If a matrix $\Sigma$ is symmetric, nonnegative definite, we denote by $\Sigma^{1/2}$ the unique 
symmetric, nonnegative definite square root of $\Sigma$. 
$\text{Id}$ denotes the identity matrix and from now on $Z$ will denote a random vector 
having standard multivariate normal distribution. The expectation with respect to the measure $P_{n,\beta,he_l,\xi}$ will be denoted by 
$\E:=\E_{P_{n,\beta,he_l,\xi}}$. 

Let $\pi_k:\R^p\rightarrow\R^k$ (with $k\leq p$) denote the canonical projection.
\begin{theorem}\label{THHUM}~\\
Let $\beta, h> 0$, $l\in\Zz$, $l\neq 0$, and $k\in\N$. We assume that $p$ depends on $n$ in a nondecreasing way satisfying $p\leq n$. Let $x=x_l^n(\xi)$ be defined as in \eqref{XLN} and $W$ be the following random variable:
\begin{align*}
W:=\sqrt{n}\pi_k\left(\frac{S_n}{n}-x\right).
\end{align*}
If $Z$ has the $k$-dimensional standard normal distribution, under the measure $P_{n,\beta,he_l,\xi}$, we have, for every three times differentiable function $g$ and $\Ws_{\xi}$-almost all $\xi$,
\begin{align*}
\big| \E g(W) - \E g \left( \Sigma^{1/2} Z\right) \big| \leq C \max\left\{p\sqrt{p}\epsilon_n,\frac{p^2}{n^{1/2}}\right\},
\end{align*}
for a constant $C$ and $\Sigma := \E\left[W \, W^t \right]$.
\end{theorem}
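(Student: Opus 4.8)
The plan is to prove the statement by Stein's method of exchangeable pairs in its multivariate form (in the formulation of Reinert and R\"ollin), with the pair produced by a single step of the Glauber dynamics. It is enough to treat the case $k=p$: the image of a centred Gaussian vector under a linear map is again a centred Gaussian with the induced covariance, and $g\circ\pi_k$ is three times differentiable with derivative norms no larger than those of $g$, so the general case follows by applying the bound for the $p$-dimensional vector $W=\sqrt n(S_n/n-x)$ to the test function $g\circ\pi_k$. Set therefore $W:=\sqrt n\,(S_n/n-x)\in\R^p$, $\Sigma:=\E[W\,W^t]$, with $x=x_l^n(\xi)$ as in \eqref{XLN}; throughout we work for $n\ge n_1(\xi)$ and $\Ws_{\xi}$-almost all $\xi$, the null set and the finitely many small $n$ being irrelevant. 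Given $\sigma$, pick $I$ uniformly in $\{1,\dots,n\}$ and replace $\sigma_I$ by $\sigma_I'$ sampled from the conditional $P_{n,\beta,he_l,\xi}$-law of the $I$-th spin given $(\sigma_j)_{j\ne I}$. This dynamics is reversible with respect to $P_{n,\beta,he_l,\xi}$, so $(\sigma,\sigma')$, hence $(W,W')$ with $W'-W=\tfrac1{\sqrt n}\,\xi_I(\sigma_I'-\sigma_I)$, is exchangeable; moreover $\|W'-W\|\le 2\sqrt p/\sqrt n$, and $\E[\sigma_I'\mid\sigma,I]=\tanh\langle\xi_I,\tfrac\beta n S_n^{(I)}+he_l\rangle$ with $S_n^{(I)}:=S_n-\xi_I\sigma_I$.

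Second, I establish the approximate linearity condition. Writing $\langle\xi_I,\tfrac\beta n S_n^{(I)}+he_l\rangle=\langle\lambda_l^n(\xi),\xi_I\rangle+\delta_I$ with $\delta_I=\tfrac\beta{\sqrt n}\langle\xi_I,W\rangle-\tfrac{\beta p}{n}\sigma_I$ (using $\lambda_l^n(\xi)=\beta x+he_l$), a first-order Taylor expansion of $\tanh$ together with the fixed-point identity $\tfrac1n\sum_j\xi_j\tanh\langle\lambda_l^n(\xi),\xi_j\rangle=x$ --- which is precisely $D\Phi(\lambda_l^n(\xi))=0$, by \eqref{PHI} and Proposition \ref{ProPhi} --- gives
\begin{align*}
\E[W'-W\mid\sigma]=-\Lambda W+R,\qquad \Lambda:=\tfrac1n\bigl(\text{Id}-\beta A^n(\xi)\bigr)=\tfrac\beta n\bigl(-D^2\Phi(\lambda_l^n(\xi))\bigr),
\end{align*}
with $A^n(\xi):=\tfrac1n\sum_i\bigl(1-\tanh^2\langle\lambda_l^n(\xi),\xi_i\rangle\bigr)\,\xi_i\xi_i^t$. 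By Proposition \ref{ProPhi}(1), for $n$ large $\lambda_l^n(\xi)$ lies in the ball on which $-D^2\Phi$ is uniformly positive definite, so $\Lambda$ is symmetric positive definite with $\|\Lambda^{-1}\|\le n/(\beta c_1)$. The remainder $R$ collects the linear-in-$\delta_I$ contribution of the self-interaction term $-\tfrac{\beta p}{n}\sigma_I$ and the second-order Taylor term; using $\|\xi_i\|^2=p$, $\tfrac1n\sum_i\langle\xi_i,W\rangle^2\le(1+\epsilon_n)\|W\|^2$ from \eqref{AbschPatterns}, and the a priori moment bound $\E\|W\|^2\le Cp$ (which follows from the Hubbard--Stratonovich representation of the law of $S_n/n$ and the Laplace analysis at $\lambda_l^n(\xi)$ underlying Proposition \ref{ProPhi}), one gets $\E\|\Lambda^{-1}R\|\le C\,p^{3/2}/\sqrt n\le C\,p^2/\sqrt n$.

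Third, I feed $(W,W')$, $\Lambda$, $R$ into the multivariate exchangeable-pairs bound, which controls $|\E g(W)-\E g(\Sigma^{1/2}Z)|$ for $g\in C^3$ by three contributions: a remainder term, essentially $\E\|\Lambda^{-1}R\|$, already bounded; a third-order term built from $\|W'-W\|^3\le 8p^{3/2}/n^{3/2}$ and $\|\Lambda^{-1}\|\le n/(\beta c_1)$, which is $\le C\,p^2/\sqrt n$; and a covariance-matching term measuring, after summation over the $p^2$ matrix entries, the deviation of $\Lambda^{-1}\E[(W'-W)(W'-W)^t\mid\sigma]$ from $2\Sigma$. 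For the last, $\E[(W'-W)(W'-W)^t\mid\sigma]=\tfrac2{n^2}\sum_I\xi_I\xi_I^t\bigl(1-\sigma_I\tanh\langle\xi_I,\tfrac\beta n S_n^{(I)}+he_l\rangle\bigr)$; expanding the $\tanh$ around $\lambda_l^n(\xi)$ again, and using exchangeability to identify the mean of $\Lambda^{-1}\E[(W'-W)(W'-W)^t\mid\sigma]$ with $2\Sigma$ up to a term of size $\|\Lambda^{-1}\|\,\E[\|W\|\,\|R\|]\le C\,p^2/\sqrt n$, one is left with two error sources. The first is the pattern-correlation matrix $\Sigma^n(\xi)$, controlled in operator norm by $\epsilon_n$ through \eqref{AbschPatterns}; as the Stein estimate sums over all $p^2$ entries and an operator-norm bound passes to a Frobenius bound at the cost of $\sqrt p$, this contributes $O(p\sqrt p\,\epsilon_n)$. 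The second is the fluctuation of the $\sigma$-conditional covariance matrix about its Gibbs mean, together with the quadratic Taylor pieces, which by a direct second-moment estimate under $P_{n,\beta,he_l,\xi}$ --- as in the Curie--Weiss case --- and a priori second- and fourth-moment bounds on $W$ of orders $p$ and $p^2$ contributes $O(p^2/\sqrt n)$. Summing the three contributions yields the stated bound.

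The main obstacle is the covariance-matching term. Two difficulties meet there: one must propagate the operator-norm control $\|\Sigma^n(\xi)\|\le\epsilon_n$ from \eqref{AbschPatterns} through the entry-wise Stein estimate without losing more than the unavoidable $\sqrt p$ (this fixes the exponent in $p\sqrt p\,\epsilon_n$), and one must bound the fluctuation of the conditional covariance matrix under the Gibbs measure --- the step where the Taylor linearisation at the maximiser $\lambda_l^n(\xi)$ and the a priori moment bounds on $W$, hence ultimately the Hubbard--Stratonovich structure behind Proposition \ref{ProPhi}, are indispensable. The regression remainder, the third-order term, and the power-counting in the covariance term are routine but lengthy.
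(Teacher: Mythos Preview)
Your overall architecture---exchangeable pair via one Glauber step, linearisation of $\tanh$ around $\lambda_l^n(\xi)$, and the Reinert--R\"ollin bound---is exactly what the paper does. The difference, and the gap, is your reduction to $k=p$.

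The reduction is formally sound: $g\circ\pi_k$ has derivative norms controlled by those of $g$, and $\pi_k(\Sigma^{1/2}Z)$ has the right law. But it requires the bound for the full $p$-dimensional $W$ with the \emph{same} right-hand side and a constant independent of $p$. The Reinert--R\"ollin theorem as stated (Theorem~\ref{RR}) does not give this. Its error terms carry explicit dimension: $A$ and $B$ are sums over $d^2$ and $d^3$ index triples, $\lambda^{(i)}=\sum_{m=1}^d|(\Lambda^{-1})_{m,i}|$ is a sum over $d$ entries, and the $C$-term is multiplied by~$d$. With $d=p$ growing, these are not absorbed by your operator-norm estimate $\|\Lambda^{-1}\|\le n/(\beta c_1)$. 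Concretely: each $|W_i'-W_i|\le 2/\sqrt n$, so $B\le 8n^{-3/2}\cdot p^2\sum_{i=1}^p\lambda^{(i)}$; even granting $\lambda^{(i)}=O(n)$ this is $O(p^3/\sqrt n)$, not $O(p^2/\sqrt n)$. The $A$- and $C$-terms degrade similarly. Your phrasing ``a third-order term built from $\|W'-W\|^3$ and $\|\Lambda^{-1}\|$'' presupposes a coordinate-free Stein bound that is not the one available here.

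The paper avoids this by \emph{not} reducing to $k=p$. It keeps $d=k$ fixed, so $\Lambda$ is the $k\times k$ block of $-\tfrac{\beta}{n}D^2\Phi(\lambda_l^n(\xi))$ and all the Reinert--R\"ollin sums are over a bounded index set. The price is that the coupling between the first $k$ and the remaining $p-k$ coordinates---the sum $\sum_{t=k+1}^p(\partial^2\Phi/\partial\lambda_i\partial\lambda_t)(\lambda_l^n(\xi))\,\beta W_t/n$---lands in the remainder $R$. This is exactly where Lemma~\ref{BoundedDerivativesPhi} enters: the off-diagonal entries of $D^2\Phi(\lambda_l^n(\xi))$ are $O(\sqrt p\,\epsilon_n)$, so this cross-term is $O(p\sqrt p\,\epsilon_n/n)$, and after multiplication by $\lambda^{(i)}=O(n)$ and the (fixed) sum over $i\le k$ one obtains the $p\sqrt p\,\epsilon_n$ in the final rate. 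Your sketch invokes $\epsilon_n$ only in the covariance-matching term; in the paper it is the remainder term that produces it, precisely through this projection mechanism.
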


\begin{remark}
The rate of convergence obtained here is useless unless 
\begin{align}\label{RateHop}
\max\left\{p\sqrt{p}\epsilon_n,\frac{p^2}{n^{1/2}}\right\}\rightarrow 0.
\end{align}
In \cite[Theorem 1.1]{Bovier/Gayrard:1997} 
the authors proved that the condition $p/n\rightarrow 0$ is sufficient in order to state the central limit theorem and show the weak convergence. 
In \cite{Gentz:1996} and \cite{Bovier/Gayrard:1997}  there is no information available on the speed of convergence.
Obviously \eqref{RateHop} is poorer but we do not need any conditions on $p$ in advance. Our theorem implies weak convergence.
\end{remark} 

In order to state a result for non-smooth test functions $g$ in the multivariate setting, we introduce
a class of test functions $\G$ following \cite{Rinott/Rotar:1996}. Let again $\Phi$ denote the standard normal distribution function in $\R^d$. We define for $g:\R^d\rightarrow \R$
\begin{align}
g_{\delta}^+(x)&=\sup\{g(x+y):|y|\leq\delta\},\label{g+}\\
g_{\delta}^-(x)&=\inf\{g(x+y):|y|\leq\delta\},\label{g-}\\
\tilde g(x,\delta)&=g_{\delta}^+(x)-g_{\delta}^-(x)\label{gschl}.
\end{align}
Let $\G$ be a class of real measurable functions on $\R^d$ such that
\begin{enumerate}
\item The functions $g\in\G$ are uniformly bounded in absolute value by a constant, which we take to be 1 without loss of generality.
\item For any $d\times d$ matrix $A$ and any vector $b\in\R^d$, $g(Ax+b)\in\G$.
\item For any $\delta>0$ and any $g \in \G$, $g_{\delta}^+(x)$ and $g_{\delta}^-(x)$ are in $\G$.
\item For some constant $a=a(\G,d)$, $\sup\limits_{g\in\G}\left\{\int\limits_{\R^d}\tilde g(x,\delta)\Phi(dx)\right\}\leq a\delta$. 
\end{enumerate}
Obviously we may assume $a\geq 1$. Considering the one dimensional case, we notice that the collection of indicators of all half lines and indicators of all intervals form classes in $\G$ 
that satisfy these conditions with $a=\sqrt{2/\pi}$ and $a=2\sqrt{2/\pi}$ respectively. This was shown for example in \cite{Rinott/Rotar:1997}. 
In dimension $d \geq 1$ the class of indicators of convex sets is known to be such a class.
Using this class of functions we are able to present rates of convergence for non-smooth test functions.

\begin{theorem} \label{THHUM2}~\\
Let $\beta, h> 0$, $l\in\Zz$, $(l\neq 0)$ and $k\in\N$. We assume that $p$ depends on $n$ in a nondecreasing way satisfying $p\leq n$. Let $x=x_l^n(\xi)$ be defined as in \eqref{XLN} and $W$ be as in Theorem \ref{THHUM}. If $Z$ has the $k$-dimensional standard normal distribution, under the measure $P_{n,\beta,he_l,\xi}$, we have, for all $g\in \G$ with $|g|\leq 1$ and $\Ws_{\xi}$-almost all $\xi$,
\begin{align*}
\big| \E g(W) - \E g \left( \Sigma^{1/2} Z\right) \big| \leq C\log(n)\max\left\{p\sqrt{p}\epsilon_n,\frac{p^2}{n^{1/2}}\right\},
\end{align*}
for a constant $C$ and $\Sigma := \E\left[W \, W^t \right]$.
\end{theorem}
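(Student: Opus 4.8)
The plan is to obtain Theorem \ref{THHUM2} from the smooth estimate of Theorem \ref{THHUM} together with the smoothing lemma of Rinott and Rotar (\cite{Rinott/Rotar:1996}, see also \cite{Rinott/Rotar:1997}), which transforms a normal approximation bound valid for three times differentiable test functions into one valid over the class $\G$, at the cost of a logarithmic factor. Write $R_n:=\max\{p\sqrt{p}\,\epsilon_n,\ p^2 n^{-1/2}\}$. For those $n$ with $R_n\log n>1$ the asserted bound holds with any $C\ge 2$, since $|g|\le1$ forces $|\E g(W)-\E g(\Sigma^{1/2}Z)|\le 2$; so we may assume $R_n\log n\le 1$, whence $R_n\le 1/\log n\to0$ and, since $R_n\ge p^2 n^{-1/2}\ge n^{-1/2}$, also $\log(1/R_n)\le\tfrac12\log n$. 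One further preliminary I would record — reading it off the variance estimates that enter the proof of Theorem \ref{THHUM} — is that the covariance $\Sigma=\E[WW^t]$ is, for $n$ large, nonsingular with $\|\Sigma^{1/2}\|$ and $\|\Sigma^{-1/2}\|$ bounded uniformly in $n$; note that no more than this is needed (in particular not that $\Sigma$ is close to the identity), because the limit law in question is $\Ncal(0,\Sigma)$ itself, the law of $\Sigma^{1/2}Z$, and property (2) of $\G$ together with the substitution $y=\Sigma^{1/2}v$ shows that $\Ncal(0,\Sigma)$ still satisfies property (4) with constant $a\|\Sigma^{-1/2}\|$.

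What I would take from Theorem \ref{THHUM} is its proof rather than the bare statement: the exchangeable-pairs argument there yields, for every three times differentiable $h:\R^k\to\R$,
\begin{align*}
\big|\E h(W)-\E h(\Sigma^{1/2}Z)\big|\ \le\ C\big(\|D^2 h\|_\infty\,R_n+\|D^3 h\|_\infty\,R_n'\big),
\end{align*}
where $R_n'\le R_n$ is a third-order remainder of strictly smaller order (it carries an extra increment factor of size $O(n^{-1/2})$). Feeding this into the Rinott--Rotar smoothing lemma — which internally mollifies $g$ at a scale $\delta$, passes to the associated (now smooth) Stein solution, and, crucially, controls the mollification error by an anti-concentration inequality for $W$ before optimising over $\delta$ — upgrades the above to
\begin{align*}
\sup_{g\in\G}\big|\E g(W)-\E g(\Sigma^{1/2}Z)\big|\ \le\ C\big(R_n\log(1/R_n)+R_n'\big)\ \le\ C\log(n)\,R_n,
\end{align*}
the optimal scale being $\delta\asymp R_n$ and the logarithm being exactly $\log(1/R_n)\le\tfrac12\log n$. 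This is the assertion of Theorem \ref{THHUM2}. Alternatively one may bypass the lemma and re-run the exchangeable-pairs computation of Theorem \ref{THHUM} directly with the Stein solution $f_\delta$ of a $\delta$-mollified test function, using the bounds $\|D^2 f_\delta\|_\infty=O(\log(1/\delta))$ and $\|D^3 f_\delta\|_\infty=O(\delta^{-1})$ that the Ornstein--Uhlenbeck smoothing provides; the bookkeeping is the same.

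The main obstacle is the anti-concentration input hidden in the smoothing lemma: one must show that, for convex $K\subset\R^k$, $P_{n,\beta,he_l,\xi}\big(\dist(W,\partial K)\le\delta\big)\le C\big(\delta+R_n\log(1/\delta)\big)$. This is precisely what turns the passage ``smooth $\Rightarrow$ $\G$'' into a merely logarithmic loss — rather than a loss of a power of $R_n$, which is all that naive mollification-and-optimisation gives — and it is where the factor $\log n$ is born. The standard route, which I would follow, is a bootstrap: use Theorem \ref{THHUM} once, with a single well-chosen smooth function, to obtain a crude normal approximation for $W$; transfer the concentration property (4) of $\Ncal(0,\Sigma)$ back to $W$ up to that crude error; and iterate this together with the Stein argument to reach the displayed inequality. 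A subsidiary, routine, point is to isolate from the proof of Theorem \ref{THHUM} both the uniform nonsingularity of $\Sigma$ used above and the precise form of the third-order remainder $R_n'$.
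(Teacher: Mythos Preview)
Your outline is correct in spirit, but it takes a longer route than the paper. The paper does not go back to Theorem \ref{THHUM} and post-process it with a Rinott--Rotar smoothing lemma; instead it invokes Theorem \ref{Kolmogorov} directly. That theorem is a ready-made exchangeable-pairs bound for the class $\G$ (proved in \cite{Eichelsbacher/Martschink:2011}) whose hypotheses are exactly the regression condition \eqref{regressioncond} together with the deterministic increment bound $|W_i'-W_i|\le A$. Both are already available from the construction preceding Theorem \ref{THHUM}: the same exchangeable pair and the same $\Lambda$, $R$ are reused, and $A=n^{-1/2}$ because a single spin is flipped. The paper then simply reads off $A_1$, $A_2$, $A_3$ from the quantities $A$, $C$, $\lambda^{(i)}$ already bounded in the proof of Theorem \ref{THHUM}; with $A=n^{-1/2}$ and $A_3=\Oo(n)$ one gets $t\asymp n^{-1}$ and hence $\log(t^{-1})=\Oo(\log n)$, which is where the logarithm enters.

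What you propose --- mollify $g$ at scale $\delta$, feed the Stein solution $f_\delta$ with $|f_\delta|_2=\Oo(\log(1/\delta))$, $|f_\delta|_3=\Oo(\delta^{-1})$ into the smooth estimate, and bootstrap an anti-concentration inequality for $W$ --- is precisely the machinery that Theorem \ref{Kolmogorov} packages. So your argument is not wrong, but it re-derives that theorem rather than citing it. The practical advantage of the paper's route is that the bounded-increment hypothesis $|W_i'-W_i|\le n^{-1/2}$ replaces your separate anti-concentration bootstrap: it is this deterministic bound, not a moment estimate, that drives the $\log n$ loss in Theorem \ref{Kolmogorov}. Your approach would work, but you should be aware that the anti-concentration step you flag as ``the main obstacle'' is already absorbed into Theorem \ref{Kolmogorov} via the increment bound, so in this setting there is no need to carry it out by hand.
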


In the case where $p$ is fixed the rate gets much simpler since we do not need the projection in order to reduce the size of the vector $W$.

\begin{theorem}\label{THHUMFix}~\\
Let $\beta, h> 0$, $l\in\Zz$ and $l\neq 0$. We assume that $p$ is fixed. Let $x=x_l^n(\xi)$ be defined as in \eqref{XLN} and $W$ be the following random variable:
\begin{align*}
W:=\sqrt{n}\left(\frac{S_n}{n}-x\right).
\end{align*}
If $Z$ has the $p$-dimensional standard normal distribution, under the measure $P_{n,\beta,he_l,\xi}$, we have, for every three times differentiable function $g$ and $\Ws_{\xi}$-almost all $\xi$,
\begin{align*}
\big| \E g(W) - \E g \left( \Sigma^{1/2} Z\right) \big| \leq C n^{-1/2},
\end{align*}
for a constant $C$ and $\Sigma := \E\left[W \, W^t \right]$.
\end{theorem}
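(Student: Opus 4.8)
The plan is to run the exchangeable--pairs version of Stein's method, as in the proof of Theorem~\ref{THHUM}, and to exploit that when $p$ is fixed every combinatorial factor is an $O(1)$ constant, so that the rough almost--sure bound \eqref{AbschPatterns} never has to be invoked: keeping the matrix $\Lambda$ below, the covariance $\Sigma=\E[WW^t]$ and the centering $x=x_l^n(\xi)$ as the exact random objects they are, the only error left is the genuine $n^{-1/2}$ produced by a single spin flip, with no logarithmic loss. First I would build the exchangeable pair: sample $\sigma$ from $P_{n,\beta,he_l,\xi}$, pick $I$ uniformly in $\{1,\dots,n\}$ independently, resample $\sigma_I'$ from the conditional Gibbs law of the $I$--th spin given $(\sigma_j)_{j\neq I}$, and set $\sigma'=(\sigma_1,\dots,\sigma_{I-1},\sigma_I',\sigma_{I+1},\dots,\sigma_n)$. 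Then $(\sigma,\sigma')$, hence $(W,W')$ with $W'=\sqrt n\,(S_n(\sigma',\xi)/n-x)$, is exchangeable, $W'-W=n^{-1/2}\xi_I(\sigma_I'-\sigma_I)$, so $\|W'-W\|\le 2\sqrt p\,n^{-1/2}$. A short computation with \eqref{HHM} and the field term in \eqref{PNBHXI} gives $\E[\sigma_I'\mid\sigma]=\tanh\bigl(\beta\langle S_n/n,\xi_I\rangle+\langle he_l,\xi_I\rangle-\tfrac{\beta p}{n}\sigma_I\bigr)$, hence, averaging over $I$,
\[
\E[W'-W\mid\sigma]=\frac{1}{n^{3/2}}\sum_{i=1}^n\xi_i\Bigl(\tanh\bigl(\beta\langle S_n/n,\xi_i\rangle+\langle he_l,\xi_i\rangle-\tfrac{\beta p}{n}\sigma_i\bigr)-\sigma_i\Bigr).
\]

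Next I would Taylor--expand the hyperbolic tangent around the random centering. With $S_n/n=x+W/\sqrt n$ and the first--order condition $D\Phi(\lambda_l^n(\xi))=0$ of Proposition~\ref{ProPhi}, which after using $\lambda_l^n(\xi)=\beta x+he_l$ and \eqref{XLN} reads $\sum_i\xi_i\tanh(\beta\langle x,\xi_i\rangle+\langle he_l,\xi_i\rangle)=nx$, the zeroth--order term collapses to $n^{-3/2}(nx-S_n)=-W/n$, and together with the first--order term this produces $-\Lambda W$ with
\[
\Lambda=\frac1n\Bigl(\text{Id}-\frac{\beta}{n}\sum_{i=1}^n\bigl(1-\tanh^2(\beta\langle x,\xi_i\rangle+\langle he_l,\xi_i\rangle)\bigr)\xi_i\xi_i^t\Bigr),
\]
while the second-- and third--order Taylor remainders and the $O(p/n)$ self--interaction correction combine into a remainder $R$ with $\|R\|\le Cn^{-3/2}(1+\|W\|^2)$, so that $\E[W'-W\mid\sigma]=-\Lambda W+R$. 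Since $\lambda_l^n(\xi)\in\overline{B_{r_1}(\arctanh(x^*e_l))}$ for $n$ large by part~(2) of Proposition~\ref{ProPhi}, part~(1) gives $-D^2\Phi(\lambda_l^n(\xi))\succeq c_1\,\text{Id}$, i.e.\ $\Lambda\succeq(\beta c_1/n)\,\text{Id}$; in particular $\Lambda$ is invertible with $\|\Lambda^{-1}\|\le n/(\beta c_1)$, and as $\Lambda$ is symmetric the exchangeability identity gives the exact relation $\E[(W'-W)(W'-W)^t]=2\Lambda\Sigma-2\E[RW^t]$.

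It then remains to feed the standard estimates into the multivariate exchangeable--pairs bound used in the proof of Theorem~\ref{THHUM}. For fixed $p$ and $\Ws_{\xi}$--almost every $\xi$ the variable $W$ has, for $n$ large, moments of every order bounded uniformly in $n$ (this is part of the analysis of Theorem~\ref{THHUM}; it also follows from the Hubbard--Stratonovich representation of the overlap used in \cite{Gentz:1996} and \cite{Bovier/Gayrard:1997}). Hence $\E\|W'-W\|^3=O(n^{-3/2})$; $\E\|R\|=O(n^{-3/2})$ and $\E\|RW^t\|=O(n^{-3/2})$; and---the crux---the conditional second moment concentrates,
\[
\E\Bigl\|\,\E[(W'-W)(W'-W)^t\mid\sigma]-\E[(W'-W)(W'-W)^t]\,\Bigr\|=O(n^{-3/2}).
\]
Multiplying each of these $O(n^{-3/2})$ quantities by $\|\Lambda^{-1}\|=O(n)$ inside the Stein estimate gives $|\E g(W)-\E g(\Sigma^{1/2}Z)|\le C\,n^{-1/2}$ for every three times differentiable $g$, which is the assertion.

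The main obstacle is exactly the conditional--second--moment estimate. Writing $\E[(W'-W)(W'-W)^t\mid\sigma]=\tfrac{2}{n^2}\sum_i\xi_i\xi_i^t(1-Y_i)$ with $Y_i=\sigma_i\tanh(\beta\langle S_n/n,\xi_i\rangle+\langle he_l,\xi_i\rangle-\tfrac{\beta p}{n}\sigma_i)$, its deviation from the mean is $-\tfrac{2}{n^2}\sum_i\xi_i\xi_i^t(Y_i-\E Y_i)$, and bounding its $L^1$--norm by $O(n^{-3/2})$ reduces, after Cauchy--Schwarz and the boundedness of the matrices $\xi_i\xi_i^t$, to the decay--of--correlations bound $\operatorname{Cov}_{P_{n,\beta,he_l,\xi}}(Y_i,Y_j)=O(1/n)$ for $i\neq j$ (and $O(1)$ for $i=j$), uniformly for $\Ws_{\xi}$--almost every $\xi$. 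For $(\beta,h)\neq(\beta_c,0)$ this holds because the Hopfield Gibbs measure decorrelates single spins near the stable point $x^*e_l$; this is precisely where the structure exploited in \cite{Gentz:1996} and \cite{Bovier/Gayrard:1997} (equivalently, the part of the proof of Theorem~\ref{THHUM} controlling the conditional variance) is needed, and it is also the point at which---because no comparison of $\Lambda$, $\Sigma$ or $x$ to their deterministic limits is ever made---the quantity $\epsilon_n$, and with it the logarithmic factor of Theorem~\ref{THHUM}, disappears.
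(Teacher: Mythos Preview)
Your proposal follows the same overall strategy as the paper: the same Gibbs--sampler exchangeable pair, the same matrix $\Lambda=\tfrac{\beta}{n}\bigl(-D^2\Phi(\lambda_l^n(\xi))\bigr)$ (your formula for $\Lambda$ is exactly $\tfrac{\beta}{n}C_l^n(\xi)$ once one uses $\lambda_l^n(\xi)=\beta x+he_l$), the same remainder $R$ consisting of the self--interaction correction $R_{1,i}$ plus the second--order Taylor remainder $\tilde R_{2,i}$ of \eqref{R2iHf}, and the same application of Theorem~\ref{RR}. You also correctly isolate the key simplification over Theorem~\ref{THHUM}: with $p$ fixed there is no projection, so the cross terms \eqref{BoundpFIx} that forced the appearance of $\epsilon_n$ via Lemma~\ref{BoundedDerivativesPhi} are absorbed into $\Lambda W$ rather than into $R$, and only the genuine $O(n^{-3/2})$ Taylor remainder survives. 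Your bounds on $\|\Lambda^{-1}\|$, on $\E\|W'-W\|^3$, and on $\E\|R\|$ match the paper's $B$ and $C$ estimates.

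The one substantive difference is in the conditional--variance term $A$. You write the conditional second moment as the single sum $\tfrac{2}{n^2}\sum_i\xi_i\xi_i^t(1-Y_i)$ and reduce the required $O(n^{-3/2})$ bound to the spin--correlation estimate $\operatorname{Cov}(Y_i,Y_j)=O(1/n)$, which you attribute to ``decorrelation near the stable point''. The paper takes a different route: it decomposes $\E[(W_i'-W_i)(W_j'-W_j)\mid\F]$ into the pieces $A_1,A_2,A_3$ of \eqref{AiH}--\eqref{MiH} and, crucially, uses the algebraic identity of Lemma~\ref{PhiTan} to rewrite $\tfrac1n\sum_t\xi_t^i\tanh(\beta m_t+h\xi_t^l)$ as $S_{n,i}/n+\partial_i\Phi(\beta S_n/n+he_l)$, i.e.\ as a smooth function of $W$ alone; the variance is then controlled purely by the moment bounds of Lemma~\ref{EndlMoH}, with no separate spin--correlation input. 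This is more self--contained than your route: the bound $\operatorname{Cov}(Y_i,Y_j)=O(1/n)$ is true for $(\beta,h)\neq(\beta_c,0)$, but it is itself a nontrivial statement about the Gibbs measure, and your pointers to \cite{Gentz:1996}, \cite{Bovier/Gayrard:1997} and ``the part of the proof of Theorem~\ref{THHUM} controlling the conditional variance'' do not quite locate a proof of it. If you want to stay within the paper's toolkit, the cleanest fix is to Taylor--expand $Y_i$ and invoke Lemma~\ref{PhiTan} as the paper does, which converts the problem back to moment bounds on $W$ and avoids the appeal to an external decorrelation estimate.
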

With the same techniques necessary to prove Theorem \ref{THHUM2} we get a theorem similar to Theorem \ref{THHUMFix} with rate $\log(n)n^{-1/2}$.

When there is no external field it is natural to ask for the fluctuations of the overlap around $x^*e_l$. With $L$ as in \eqref{L} we determine the conditional fluctuations and a rate of convergence:

\begin{theorem}\label{THHMM}~\\
Let $\beta >0$, $\beta\neq \beta_c$, $h=0$, $l\in L$ and $k\in\N$. We assume that $p$ depends on $n$ in a nondecreasing way satisfying $p\leq n$. Let $x=x_l^n(\xi)$ be defined as in \eqref{XLN} and $W$ be as in Theorem \ref{THHUM}. Then, if $Z$ has the $k$-dimensional standard normal distribution, under the conditional measure
\begin{align*}
P_{n,\beta,\xi}\left(\,\,\cdot\,\,\bigg| \frac{S_n}{n}\in B(x^*e_l,\epsilon)\right),
\end{align*}
we have for every three times differentiable function $g$ and $\Ws_{\xi}$-almost all $\xi$,
\begin{align*}
\big| \E g(W) - \E g \left( \Sigma^{1/2} Z\right) \big| \leq C \max\left\{p\sqrt{p}\epsilon_n,\frac{p^2}{n^{1/2}}\right\},
\end{align*}
for a constant $C$ and $\Sigma := \E\left[W \, W^t \right]$.
\end{theorem}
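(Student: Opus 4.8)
The plan is to repeat the exchangeable-pairs argument behind Theorem~\ref{THHUM} essentially verbatim, the only genuinely new ingredient being the bookkeeping of the conditioning on $B(x^*e_l,\epsilon)$. Write $\mu_\epsilon:=P_{n,\beta,\xi}\bigl(\,\cdot\mid S_n/n\in B(x^*e_l,\epsilon)\bigr)$. I would build an exchangeable pair $(\sigma,\sigma')$ under $\mu_\epsilon$ by the single-site Gibbs-sampler step: pick $I$ uniformly on $\{1,\dots,n\}$ independently of $\sigma$ and replace $\sigma_I$ by $\sigma_I'$ drawn from the $\mu_\epsilon$-conditional law of the $I$-th coordinate given $(\sigma_j)_{j\neq I}$; reversibility of $\mu_\epsilon$ under this kernel makes $(\sigma,\sigma')$, hence $(W,W')$ with $W':=\sqrt n\,\pi_k(S_n(\sigma',\xi)/n-x)$, exchangeable. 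Since only coordinate $I$ changes, $W_a-W_a'=\tfrac1{\sqrt n}\xi_I^a(\sigma_I-\sigma_I')$ for $a\le k$, so $\|W-W'\|^2=\tfrac kn(\sigma_I-\sigma_I')^2\le 4k/n$. It is convenient to carry the conditional-moment computations with the full vector $\tilde W:=\sqrt n(S_n/n-x)\in\R^p$, since the single-site local field $\langle\xi_i,S_n/n\rangle$ couples all $p$ components, and to project only at the end.

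Next I would establish the linear drift identity. Because flipping one spin moves $S_n/n$ by $O(1/n)$, on configurations lying in the bulk of $B(x^*e_l,\epsilon)$ the $\mu_\epsilon$-conditional single-spin law coincides with the ordinary Gibbs single-spin law, so $\E[\sigma_i-\sigma_i'\mid\sigma_{-i}]=\sigma_i-\tanh\bigl(\beta\langle\xi_i,S_n/n\rangle\bigr)+O(1/n)$ up to a term supported within $O(1/n)$ of $\partial B(x^*e_l,\epsilon)$. Writing $S_n/n=x+\tilde W/\sqrt n$ and Taylor-expanding $\tanh$ to first order around $\beta\langle\xi_i,x\rangle=\langle\xi_i,\lambda_l^n(\xi)\rangle$, I would use the stationarity $\tfrac1n\sum_i\xi_i\tanh(\beta\langle\xi_i,x\rangle)=x$ implied by Proposition~\ref{ProPhi} to kill the non-fluctuating part, the bound \eqref{AbschPatterns} on $\Sigma^n(\xi)$ to replace $\tfrac1n\sum_i\xi_i^a\xi_i^b$ by $\delta_{ab}$, and the Curie--Weiss equation \eqref{CWEh} with $h=0$ together with \eqref{ConvCentring} to replace $1-\tanh^2(\beta\langle\xi_i,x\rangle)$ by $1-(x^*)^2$. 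This should yield
\[
\E[W-W'\mid\sigma]=\Lambda W+R ,\qquad \Lambda=\frac{1-\beta\bigl(1-(x^*)^2\bigr)}{n}\,\mathrm{Id}_k ,
\]
where $R$ collects the quadratic Taylor remainder, the $O(1/n)$ single-flip corrections, the errors of order $\epsilon_n$ from \eqref{AbschPatterns} and \eqref{ConvCentring}, and the boundary term. Part~(1) of Proposition~\ref{ProPhi}, which is exactly the place where the hypothesis $(\beta,h)\neq(\beta_c,0)$ — here $\beta\neq\beta_c$ — is used, forces $1-\beta(1-(x^*)^2)>0$, so $\Lambda$ is positive definite and invertible with $\|\Lambda^{-1}\|=O(n)$. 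The analogous expansion of $\E[(W-W')(W-W')^t\mid\sigma]$, based on $\E[(\sigma_i-\sigma_i')^2\mid\sigma_{-i}]=2\bigl(1-\sigma_i\tanh(\beta\langle\xi_i,S_n/n\rangle)\bigr)$, shows this conditional matrix to concentrate around its mean $2\Lambda\Sigma$ up to a remainder of the same order, which is what the multivariate exchangeable-pairs lemma requires.

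The step I expect to be the main obstacle is controlling the boundary term, and more generally the whole effect of conditioning on $B(x^*e_l,\epsilon)$. Since $l\in L$, the centre $x^*e_l$ is a direction favoured by the Gibbs measure, and for $\beta\neq\beta_c$ the large deviation rate function of $S_n/n$ under $P_{n,\beta,\xi}$ (see \cite{Bovier/Gayrard:1996}) has $x^*e_l$ as a nondegenerate local minimum; choosing $\epsilon$ small enough that $B(x^*e_l,\epsilon)$ isolates this well, the unconditioned Gibbs measure gives mass $e^{-cn}$ both to the $O(1/n)$-neighbourhood of $\partial B(x^*e_l,\epsilon)$ and, crucially, to the complement of $B(x^*e_l,\epsilon)$, so the normalisation $P_{n,\beta,\xi}(S_n/n\in B(x^*e_l,\epsilon))^{-1}$ stays bounded and the boundary contribution to $R$ and to the conditional-covariance remainder is exponentially small in $n$, hence negligible against the claimed rate. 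The delicate point is to make these exponential bounds uniform in the site $i$ and in the conditioning, for $\Ws_\xi$-almost all $\xi$; this is where one must lean on the quenched large deviation estimates of \cite{Bovier/Gayrard:1996}. For $0<\beta<\beta_c$ the step is trivial: $x^*=0$, $x_l^n(\xi)=0$ by Proposition~\ref{ProPhi}, and $B(0,\epsilon)$ already carries all but an exponentially small fraction of the Gibbs mass, so $\mu_\epsilon$ and $P_{n,\beta,\xi}$ differ only to that precision.

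Finally, I would feed the exchangeable pair $(W,W')$, the linearity matrix $\Lambda$ with $\|\Lambda^{-1}\|=O(n)$, the increment bound $\|W-W'\|^2\le 4k/n$ and the remainder estimate $\E\|R\|\le C\max\{p\sqrt p\,\epsilon_n,\,p^2/n^{1/2}\}$ into the same multivariate smoothing lemma used for Theorem~\ref{THHUM}, the factor $\|\Lambda^{-1}\|=O(n)$ cancelling the $1/n$ already present in the increments and in $R$ exactly as there. Tracking the dimensions precisely — the $p\sqrt p\,\epsilon_n$ term coming from \eqref{AbschPatterns}--\eqref{ConvCentring} applied to the $p$-dimensional local field, the $p^2/n^{1/2}$ term from the quadratic $\tanh$-remainder together with the third-order Stein terms — reproduces the bound
\[
\bigl|\E g(W)-\E g(\Sigma^{1/2}Z)\bigr|\le C\max\Bigl\{p\sqrt p\,\epsilon_n,\ \frac{p^2}{n^{1/2}}\Bigr\}
\]
for every three times differentiable $g$ and $\Ws_\xi$-almost all $\xi$, which is the assertion of Theorem~\ref{THHMM}. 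No explicit evaluation of $\Sigma:=\E[WW^t]$ is required; its boundedness and, if desired, nondegeneracy follow once more from the uniform positive definiteness in Proposition~\ref{ProPhi}.
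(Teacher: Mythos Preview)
Your approach is essentially the paper's: their proof of Theorem~\ref{THHMM} simply observes that the conditional measure has density $\tilde Z^{-1}\exp(-\beta H_n)\,\Ei_{B(x^*e_l,\epsilon)}(S_n/n)$ and then says one can ``follow the lines of the proof of Theorem~\ref{THHUM}''. You are in fact more careful than the paper, since you explicitly isolate the boundary correction to the single-spin conditional law (where a flip may exit the ball) and dispose of it via quenched large-deviation bounds, a point the paper passes over in silence. The only cosmetic difference is that the paper keeps the exact $\Lambda=\frac{\beta}{n}\bigl[-D^2\Phi(\lambda_l^n(\xi))\bigr]_{|k\times k}$ as in Theorem~\ref{THHUM}, whereas you take its scalar limit $\frac{1-\beta(1-(x^*)^2)}{n}\,\mathrm{Id}_k$ and absorb the $O(\sqrt p\,\epsilon_n)$ discrepancy of Lemma~\ref{BoundedDerivativesPhi} into $R$; both choices yield the same rate.
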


Note that also for the case of $h=0$ a theorem for non-smooth test functions could be stated, similar to Theorem \ref{THHUM2}, and additionally we obtain a theorem if $p$ is fixed with rate $n^{-1/2}$ in the same way as in Theorem \ref{THHUMFix}.

In Section 2 of the present paper, we introduce Stein's method and present two plug-in theorems for multivariate normal approximation.
Section 3 contains some auxiliary results which will be necessary for the proofs given in Section 4. 

\section{Stein's method of exchangeable pairs}

Starting with a bound for the distance between univariate random variables and the normal distribution Stein's method was first 
published in \cite{Stein:1972} (1972). In \cite{Stein:1986} Stein introduced his exchangeable pair approach. At the heart of the method is a coupling of a random variable $W$ with 
another random variable $W'$ such that $(W,W')$ is {\it exchangeable}, i.e. their joint distribution is symmetric. 
Stein proved further on that a measure of proximity of W to normality may be provided by the exchangeable pair if $W'-W$ is sufficiently small. 
He assumed the property that there is a number $\lambda>0$ such that the expectation of $W'-W$ with respect to W satisfies
$$
\E[W'-W| W]=-\lambda W.
$$
Heuristically, this condition can be understood as a linear regression condition: if $(W,W')$ were bivariate normal with correlation $\varrho$, then
$\E [W' |W] = \varrho \, W$ and the condition would be satisfied with $\lambda = 1 - \varrho$.
Stein proved that for any uniformly Lipschitz function $h$
\begin{align*}
| \E h(W) - \E h(Z) | \leq \delta \|h'\|
\end{align*}
with $Z$ denoting a standard normally distributed random variable and
\begin{equation*} 
\delta = 4 \E \bigg| 1 - \frac{1}{2 \lambda} \E \bigl[ (W'-W)^2 | W \bigr] \bigg| + \frac{1}{2 \lambda} \E |W-W'|^3.
\end{equation*}
Stein's approach has been successfully applied in many models, see e.g. \cite{Stein:1986} or
\cite{DiaconisStein:2004} and references therein. In \cite{Rinott/Rotar:1997} the range of application was
extended by replacing the linear regression property by a weaker condition assuming that there is also a random variable $R=R(W)$ such that
$$
\E[W'-W| W]=-\lambda W+R.
$$
While the approach has proved successful also in non-normal contexts (see \cite{ChatterjeeDiaconis:2005},\cite{Chatterjee/Shao:2009} and \cite{Eichelsbacher/Loewe:2010}) 
it remained restricted to the one-dimensional setting for a long time. 
 Applying the linear regression heuristic in the multivariate case leads to a new condition due to \cite{ReinertRoellin:2009}:
\begin{equation} \label{regressioncond}
\E[W'-W| W]=-\Lambda W+R
\end{equation}
for an invertible $d\times d$ matrix $\Lambda$ and a remainder term $R=R(W)$. 
Different exchangeable pairs, obviously, will yield different $\Lambda$ and $R$. 

The theorems for smooth test functions are based on a nonsingular multivariate normal approximation theorem taken
from \cite{ReinertRoellin:2009}. 
To present this theorem we fix some more notations. The transpose of the inverse of a matrix will be presented in the form $A^{-t}:=(A^{-1})^t$. 
Furthermore we will need the supremum norm, denoted by $\Vert\cdot\Vert$ for both functions and matrices. 
For derivatives of smooth functions $f: \R^d \to \R$, we use the notation $\nabla$ for the gradient operator.
For a function $f:\R^d\to \R$, we abbreviate 
$$
|f|_1 := \sup\limits_{i}\bigl\Vert\frac{\partial}{\partial x_i}f\bigr\Vert, \quad |f|_2:=
\sup\limits_{i,j}\bigl\Vert\frac{\partial^2}{\partial x_i\partial x_j}f\bigr\Vert,
$$
and so on, if these derivatives exist. 

\begin{theorem} \label{RR}(Reinert, R\"ollin: 2009)\\
Assume that $(W,W')$ is an exchangeable pair of $\R^d$-valued random vectors such that
$$
\E[W]=0, \quad \E [W \, W^t] =\Sigma,
$$
with $\Sigma \in \R^{d\times d}$ symmetric and positive definite. If $(W,W')$ satisfies \eqref{regressioncond} for an invertible matrix $\Lambda$ and a $\sigma(W)$-measurable random vector $R$ and if $Z$ has d-dimensional standard normal distribution, 
we have for every three times differentiable function $g$,
\begin{equation} \label{mainboundRR}
 \left|\E g(W) - \E g\left(\Sigma^{1/2} Z\right) \right| \leq \frac{|g|_2}{4}A+\frac{|g|_3}{12}B+\left(|g|_1+\frac{1}{2}d\Vert\Sigma\Vert^{1/2}|g|_2\right)C,
\end{equation}
where, with $\lambda^{(i)}:= \sum\limits_{m=1}^d \left|(\Lambda^{-1})_{m,i} \right|$,
\begin{eqnarray*}
A &= &\sum\limits_{i,j=1}^d \lambda^{(i)} \sqrt{\V \bigl[\E[(W'_i-W_i)(W'_j-W_j)\mid W]\bigr]},  \\ 
B&=&\sum\limits_{i,j,k=1}^d\lambda^{(i)}\E |(W'_i-W_i)(W'_j-W_j)(W'_k-W_k) |,\\
C&=&\sum\limits_{i=1}^d\lambda^{(i)}\sqrt{\V\left[R_i\right]}. 
\end{eqnarray*}
\end{theorem}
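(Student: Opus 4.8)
The plan is to run Stein's method through the generator of the Ornstein--Uhlenbeck process whose invariant law is $N(0,\Sigma)$. First I would introduce the associated Stein operator $\mathcal{A}f(w)=\sum_{i,j}\Sigma_{ij}\partial_i\partial_j f(w)-\sum_i w_i\partial_i f(w)$, which annihilates $N(0,\Sigma)$ in expectation by the multivariate Stein identity $\E[Z_i\,u(Z)]=\sum_j\Sigma_{ij}\E[\partial_j u(Z)]$. For a fixed test function $g$ I would solve $\mathcal{A}\phi=g-\E g(\Sigma^{1/2}Z)$ by the semigroup formula $\phi(w)=-\int_0^\infty\bigl(\E g(e^{-s}w+\sqrt{1-e^{-2s}}\,\Sigma^{1/2}Z)-\E g(\Sigma^{1/2}Z)\bigr)\,\d s$, which indeed inverts $\mathcal{A}$ since $\mathcal A$ is the generator of the semigroup. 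Differentiating under the integral produces the factor $e^{-ks}$ on the $k$-th derivative, so that $|\phi|_k\leq\tfrac1k|g|_k$; these bounds for $k=1,2,3$ are exactly what generate the three prefactors in \eqref{mainboundRR}.

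Next I would exploit exchangeability through the fact that $\E[F(W,W')]=0$ for every antisymmetric $F$. Setting $\psi_i:=\sum_m(\Lambda^{-1})_{mi}\partial_m\phi$ (equivalently $\psi=\Lambda^{-t}\nabla\phi$), I would apply this to $F(w,w')=\sum_i(w_i'-w_i)\bigl(\psi_i(w')+\psi_i(w)\bigr)$ and Taylor expand $\psi_i(W')$ about $W$ to second order. Taking conditional expectations and using the regression condition \eqref{regressioncond}, the first-order term becomes $-\E[W^t\nabla\phi(W)]+\sum_i\E[R_i\psi_i(W)]$; the deliberate choice $\psi=\Lambda^{-t}\nabla\phi$ is precisely what collapses $\sum_{i,j}\Lambda_{ij}\E[W_j\psi_i]$ back to $\E[W^t\nabla\phi]$, reproducing the drift part of $\mathcal{A}$.

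The crux is the second-order term. Here I would first establish the moment identity $\E[(W_i'-W_i)(W_j'-W_j)]=2(\Lambda\Sigma)_{ij}-2\E[R_iW_j]$, which follows from exchangeability (giving $\E[(W_i'-W_i)W_j]=-\tfrac12\E[(W_i'-W_i)(W_j'-W_j)]$) together with \eqref{regressioncond} and $\E[W W^t]=\Sigma$. Writing $T_{ij}(W):=\E[(W_i'-W_i)(W_j'-W_j)\mid W]$ and splitting $T_{ij}=\E T_{ij}+(T_{ij}-\E T_{ij})$, the mean part contracts against $(\Lambda^{-1})_{mi}$ via $\sum_i(\Lambda^{-1})_{mi}(\Lambda\Sigma)_{ij}=\Sigma_{mj}$ to produce exactly $2\sum_{j,m}\Sigma_{jm}\E[\partial_{jm}\phi]$, i.e. the diffusion part of $\mathcal{A}$. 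Assembling the first- and second-order terms, the exchangeability identity then reads $\E[\mathcal{A}\phi(W)]=(\text{three explicit error terms})$; since $\E[\mathcal{A}\phi(W)]=\E g(W)-\E g(\Sigma^{1/2}Z)$ by construction, it remains only to bound a centred term in $T_{ij}-\E T_{ij}$, two terms carrying $R$, and the third-order Taylor remainder.

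Finally I would bound these errors. The $T_{ij}-\E T_{ij}$ term is a covariance, so Cauchy--Schwarz gives $|\E[(T_{ij}-\E T_{ij})\partial_{jm}\phi]|\leq\sqrt{\V[T_{ij}]}\,|\phi|_2$; summing and using $|\phi|_2\leq\tfrac12|g|_2$ together with $\sum_m|(\Lambda^{-1})_{mi}|=\lambda^{(i)}$ reproduces $\tfrac{|g|_2}{4}A$. Since $\E R=0$ (take expectations in \eqref{regressioncond} and use $\E[W]=0$), the two $R$-terms are also covariances: $\E[R_i\partial_m\phi]$ contributes $|g|_1C$, while $\E[R_iW_j]\E[\partial_{jm}\phi]$, bounded by $\sqrt{\V[R_i]}\sqrt{\Sigma_{jj}}\cdot\tfrac12|g|_2$ with $\sqrt{\Sigma_{jj}}\leq\Vert\Sigma\Vert^{1/2}$ and a sum of $d$ terms in $j$, contributes $\tfrac12 d\Vert\Sigma\Vert^{1/2}|g|_2\,C$. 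The third-order remainder, estimated by $|\phi|_3\leq\tfrac13|g|_3$ against the absolute third moments of the increments, gives $\tfrac{|g|_3}{12}B$, and summing is \eqref{mainboundRR}. I expect the main obstacle to be the algebraic bookkeeping of the second-order term: one must simultaneously choose $\psi=\Lambda^{-t}\nabla\phi$ and invoke the identity $\E[(W'-W)(W'-W)^t]=2\Lambda\Sigma-2\E[RW^t]$ so that the quadratic Taylor term reassembles the full operator $\mathcal{A}$, while carefully routing every residual $R$-dependent piece into $C$ rather than losing control of it.
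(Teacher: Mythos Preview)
The paper does not prove this theorem at all: it is stated, attributed to Reinert and R\"ollin (2009) in the heading, and then used as a black-box tool in Section~4. So there is no ``paper's own proof'' to compare against.

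Your sketch is, however, essentially the argument in the cited Reinert--R\"ollin paper, and the logical skeleton is sound. The semigroup solution of the Stein equation with the bounds $|\phi|_k\le |g|_k/k$, the antisymmetric-function trick with the choice $\psi=\Lambda^{-t}\nabla\phi$, the moment identity $\E[(W'-W)(W'-W)^t]=2\Lambda\Sigma-2\E[RW^t]$ derived from exchangeability plus \eqref{regressioncond}, and the split of the second-order term into its mean (which rebuilds the diffusion part of $\mathcal A$) and its fluctuation (which produces $A$) are all exactly the right moves. The constant bookkeeping you indicate ($\tfrac12$ from dividing through the exchangeability identity, $\tfrac12$ from $|\phi|_2\le|g|_2/2$ giving $|g|_2/4$; $\tfrac12$ from Taylor, $\tfrac13$ from $|\phi|_3\le|g|_3/3$, and another $\tfrac12$ from the division giving $|g|_3/12$) matches the stated bound.

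One small point to tighten if you write this out in full: the quantity $\E[(W_i'-W_i)(W_j'-W_j)]$ is symmetric in $(i,j)$, but $2(\Lambda\Sigma)_{ij}-2\E[R_iW_j]$ is not manifestly so; you only need the contraction $\sum_i(\Lambda^{-1})_{mi}(\Lambda\Sigma)_{ij}=\Sigma_{mj}$ on the particular index pattern that arises after inserting $\partial_j\psi_i=\sum_m(\Lambda^{-1})_{mi}\partial_{jm}\phi$, so no symmetrisation is required, but it is worth saying explicitly which index is summed against $\Lambda^{-1}$ when you do the algebra.
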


The advantage of Stein's method is that the bounds to a multivariate normal distribution reduce to the 
computation of, or bounds on, low order moments, here bounds on the absolute third moments, on a conditional variance and on the variance 
of the remainder term. Such variance computations may be difficult, but we will get rates of convergence at the same time.\\
In the same context as in \cite{ReinertRoellin:2009} the authors in \cite{Eichelsbacher/Martschink:2011} proved the following theorem, presenting bounds for non smooth test functions. Their development differs from \cite{ReinertRoellin:2009} using the relationship to the bounds in \cite{Rinott/Rotar:1997}.

\begin{theorem}\label{Kolmogorov}~\\
Let $(W,W')$ be an exchangeable pair with $\E[W]=0$ and $\E[WW^t]=\Sigma$ with $\Sigma\in\R^{d\times d}$ symmetric and positive definite. Again we assume that $(W,W')$ satisfies \eqref{regressioncond} for an invertible matrix $\Lambda$ and a $\sigma(W)$-measurable random vector $R$ and additionally, for $i\in\{1,\ldots,d\}$, $|W_i'-W_i|\leq A$. Then,
\begin{eqnarray*}
\sup_{g\in\G} |\E g(W)-\E g(\Sigma^{1/2}Z)| &\leq& C \bigl[
\log(t^{-1})A_1+\left(\log(t^{-1})\Vert\Sigma\Vert^{1/2}+1\right)A_2\\
&&+\biggl(1+\log(t^{-1})\sum\limits_{i=1}^d\E|W_i|+a\biggr)A^3A_3
+aA\bigr],
\end{eqnarray*}
where
\begin{eqnarray*}
A_1 &= & \sum\limits_{i,j=1}^d|(\Lambda^{-1})_{j,i}|\sqrt{\V\bigl[\E[(W'_i-W_i)(W'_j-W_j)| W]\bigr]},\\
A_2&=&\sum\limits_{i,j=1}^d|(\Lambda^{-1})_{j,i}|\sqrt{\E\left[R_i^2\right]}, \qquad
A_3=\sum\limits_{i=1}^d\max\limits_{j\in\{1,\ldots,d\}}|(\Lambda^{-1})_{j,i}|,
\end{eqnarray*} 
C denotes a constant that depends on $d$, $\sqrt{t}=2CA^3A_3$ and $a>1$ is taken from the conditions on $\G$, defined before Theorem \ref{THHUM2}.
\end{theorem}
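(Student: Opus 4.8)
\smallskip
\noindent\emph{Sketch of proof.}
The plan is to run Stein's method of exchangeable pairs exactly as behind Theorem~\ref{RR}, but, because a function $g\in\G$ is only bounded and not differentiable, to carry the estimates at the level of the solution $f$ of the Stein equation instead of inserting $|g|_1,|g|_2,|g|_3$ --- this is where the argument follows \cite{Rinott/Rotar:1997} rather than \cite{ReinertRoellin:2009}. Since $\Sigma$ is symmetric positive definite and $\Sigma^{1/2}Z\sim N(0,\Sigma)$, for $g\in\G$ set $\psi:=g-\E g(\Sigma^{1/2}Z)$ and let $f=f_g$ be the Ornstein--Uhlenbeck solution
\begin{equation*}
f=-\int_0^\infty T_s\psi\,ds,\qquad T_s\phi(w):=\E\bigl[\phi\bigl(e^{-s}w+\sqrt{1-e^{-2s}}\,\Sigma^{1/2}Z\bigr)\bigr],
\end{equation*}
so that $\operatorname{tr}(\Sigma\nabla^2 f)-\langle w,\nabla f\rangle=\psi$. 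As $\|\nabla T_s\phi\|\lesssim s^{-1/2}\|\phi\|$ for small $s$, with exponential decay for large $s$, the gradient $\nabla f$ is bounded by a constant depending only on $d$, $\Sigma$ and $\|g\|_\infty\le 1$; the Hessian, by contrast, is in general unbounded, and controlling it is the only genuine difficulty.

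First I would fix a truncation scale $t>0$ and split $f=f_t+f^t$ with $f_t=-\int_0^t T_s\psi\,ds$, $f^t=-\int_t^\infty T_s\psi\,ds$. For the far part one has a logarithmic Hessian estimate $\|\nabla^2 f^t\|\le C(1+\log(t^{-1}))$: the crude bound $\|\nabla^2 T_s\phi\|\lesssim s^{-1}\|\phi\|$ makes $\int_t^1 s^{-1}ds$ appear, but since $T_s$ only probes $g$ at spatial scale $\sqrt s$ the integrand improves to $s^{-1}$ times an oscillation of $g$ at scale $\sqrt s$, which, once averaged against the law of $W$ (or of a point on the segment $[W,W']$), is $O(\|\Sigma\|^{1/2}\sqrt s+\delta_0)$ by property~(4) of $\G$ together with the quantity $\delta_0:=\sup_{g'\in\G}|\E g'(W)-\E g'(\Sigma^{1/2}Z)|$ being estimated; hence the error terms involving $\nabla^2 f^t$ are integrable. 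For the near part no Hessian bound is used: in the exchangeable-pair identity the $f_t$-contribution is kept as a first-order finite difference $\E[(W'-W)^t\Lambda^{-t}(\nabla f_t(W')-\nabla f_t(W))]$ and bounded through $\|\nabla f_t\|\le C\sqrt t$ and the hypothesis $|W_i'-W_i|\le A$, producing a term of order $A_3\,A\,\sqrt t$. Choosing $\sqrt t$ of order $A^3A_3$ --- i.e. $\sqrt t=2CA^3A_3$ --- balances this against the logarithmic terms.

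Then I would assemble the bound from the antisymmetrised identity $0=\E[(W'-W)^t M(\nabla f(W')+\nabla f(W))]$, valid for every fixed matrix $M$ by exchangeability, with $M=-\tfrac12\Lambda^{-t}$, Taylor-expanding $\nabla f(W')$ about $W$ to second order for the $f^t$-part and to first order for the $f_t$-part, and inserting $\E[W'-W\mid W]=-\Lambda W+R$. This writes $\E g(W)-\E g(\Sigma^{1/2}Z)=\E[\operatorname{tr}(\Sigma\nabla^2 f(W))]-\E\langle W,\nabla f(W)\rangle$ in terms of: a conditional-covariance term whose deviation from $\E[\operatorname{tr}(\Sigma\nabla^2 f(W))]$ is at most $\|\nabla^2 f^t\|$ times the weighted fluctuation $A_1$ (plus a lower-order correction absorbed into $A_2$), giving $\log(t^{-1})A_1$; the remainder term $\E[R^t\Lambda^{-t}\nabla f(W)]$, bounded by Cauchy--Schwarz using $\E R=0$, $\sqrt{\V[\partial_i f(W)]}\lesssim\|\nabla^2 f^t\|\,\|\Sigma\|^{1/2}$ and $\|\nabla f\|\le C$, giving $(\log(t^{-1})\|\Sigma\|^{1/2}+1)A_2$; the $f_t$-contribution, estimated as above or through $\langle W,\nabla f_t(W)\rangle$, giving the third-moment term $A^3A_3$ with prefactor $1+\log(t^{-1})\sum_i\E|W_i|+a$; and finite-difference comparisons of $g(W)$ with $g(W')$ at scale $A$, giving via property~(4) the term $aA$. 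Collecting these is the claimed estimate.

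The step needing the most care --- and the main obstacle --- is the low-regularity bookkeeping: establishing the logarithmic Hessian bound for the Ornstein--Uhlenbeck solution at a general positive-definite $\Sigma$ while keeping the small-scale oscillation errors under control via property~(4), and then closing the resulting inequality. The oscillation estimates feed $\delta_0$ back onto the right-hand side, but with a coefficient of order $A^2A_3\log(t^{-1})$; one arranges this to be at most $1/2$ and absorbs the self-referential terms into the left-hand side, noting that when the coefficient fails to be small the right-hand side of the claimed inequality is already $\gtrsim 1$, so that the estimate is trivial because $|\E g(W)-\E g(\Sigma^{1/2}Z)|\le 2$.
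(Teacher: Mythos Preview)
The paper does not contain a proof of this theorem: it is quoted from \cite{Eichelsbacher/Martschink:2011} and used here only as a plug-in result. The paper merely remarks that the argument ``differs from \cite{ReinertRoellin:2009} using the relationship to the bounds in \cite{Rinott/Rotar:1997}'', so there is no in-paper proof to compare your sketch against.

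That said, your outline is in the right spirit and matches what the paper signals about the method: one solves the Stein equation via the Ornstein--Uhlenbeck semigroup, truncates the time integral at a level $t$, uses the $\log(t^{-1})$ Hessian bound on the tail piece together with the exchangeable-pair identity and the regression condition \eqref{regressioncond}, handles the small-time piece by a finite-difference argument exploiting $|W_i'-W_i|\le A$, and closes the bootstrap by choosing $t$ so that the self-referential contribution carries coefficient at most $1/2$. This is precisely the Rinott--Rotar machinery grafted onto the Reinert--R\"ollin regression setup.

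One point in your sketch deserves tightening: the accounting for the small-time piece $f_t$ is not quite right as stated. You write that keeping $f_t$ at first order yields a contribution of order $A_3\,A\,\sqrt t$, which with the choice $\sqrt t=2CA^3A_3$ would give $A^4A_3^2$ --- not a term appearing in the bound. In the actual argument the small-time contribution is not handled purely by $\|\nabla f_t\|\le C\sqrt t$; rather, one bounds the third-order remainder from Taylor expansion using a third-derivative estimate of the form $|f|_3\lesssim t^{-1/2}$ (plus the smoothing property (4) of $\G$ for the residual $g$-oscillation at scale $A$), which produces $A^3A_3\,t^{-1/2}$ and the $aA$ term. The choice $\sqrt t=2CA^3A_3$ then balances this against the logarithmic pieces and makes the bootstrap close. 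Apart from this bookkeeping slip, your plan is sound.
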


\section{Auxiliary results}
The quenched free-energy $\Phi$ defined in \eqref{PHI} will appear in the regression condition \eqref{regressioncond}.
\begin{lemma}\label{PhiTan}~\\
For $\Phi$ defined in \eqref{PHI} we obtain
\begin{align*}
\frac{1}{n}\sum\limits_{j=1}^n\xi_j^i\tanh(\langle\lambda,\xi_j\rangle)=\frac{1}{\beta}\left(\lambda_i-h\delta_{i,l}\right)+\frac{\partial}{\partial \lambda_i}\Phi(\lambda).
\end{align*}
\end{lemma}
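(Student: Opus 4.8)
The plan is to obtain the identity by differentiating the explicit formula \eqref{PHI} for $\Phi$ term by term with respect to $\lambda_i$ and then rearranging. First I would split $\Phi(\lambda)$ into its quadratic part and its $\log\cosh$ part. For the quadratic part, writing the $i$-th component of $he_l$ as $h\delta_{i,l}$, the chain rule gives
\begin{align*}
\frac{\partial}{\partial\lambda_i}\left(-\frac{1}{2\beta}\left\Vert\lambda-he_l\right\Vert^2\right)=-\frac{1}{\beta}\left(\lambda_i-h\delta_{i,l}\right).
\end{align*}
For the second part, since $\frac{\partial}{\partial\lambda_i}\langle\lambda,\xi_j\rangle=\xi_j^i$ and $\frac{d}{dt}\log\cosh t=\tanh t$, the chain rule yields
\begin{align*}
\frac{\partial}{\partial\lambda_i}\left(\frac{1}{n}\sum_{j=1}^n\log\cosh\langle\lambda,\xi_j\rangle\right)=\frac{1}{n}\sum_{j=1}^n\xi_j^i\tanh(\langle\lambda,\xi_j\rangle).
\end{align*}
Adding these two contributions gives $\frac{\partial}{\partial\lambda_i}\Phi(\lambda)=-\frac{1}{\beta}(\lambda_i-h\delta_{i,l})+\frac{1}{n}\sum_{j=1}^n\xi_j^i\tanh(\langle\lambda,\xi_j\rangle)$, and solving for the $\tanh$-sum is exactly the asserted formula.

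There is essentially no obstacle here: it is a one-line differentiation. The only points to watch are the identification $(e_l)_i=\delta_{i,l}$ in the quadratic term and the cancellation of the $2$ against the $\tfrac{1}{2\beta}$ prefactor, which produces the factor $\tfrac1\beta$ on the right-hand side. I state the lemma separately because its right-hand side is precisely the shape in which $D\Phi$ — and hence, via Proposition~\ref{ProPhi}, the critical point $\lambda_l^n(\xi)$ and the random centering $x_l^n(\xi)$ of \eqref{XLN} — will appear in the linear-regression identity \eqref{regressioncond} for the exchangeable pair constructed from a single-spin resampling under the Gibbs measure $P_{n,\beta,he_l,\xi}$.
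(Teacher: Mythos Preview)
Your argument is correct and is exactly the paper's approach: differentiate \eqref{PHI} term by term with respect to $\lambda_i$ and rearrange. In fact your sign bookkeeping on the quadratic part is cleaner than the paper's displayed computation, which shows $+\frac{1}{\beta}(\lambda_i-h\delta_{i,l})$ where it should have a minus sign before rearranging.
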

\begin{proof}
Differentiating with respect to $\lambda_i$ yields
\begin{align*}
\frac{\partial}{\partial \lambda_i}\Phi(\lambda)&=\frac{1}{\beta}\left(\lambda_i-h\delta_{i,l}\right)+\frac{1}{n}\sum\limits_{j=1}^n\frac{\sinh(\langle\lambda,\xi_j\rangle)}{\cosh(\langle\lambda,\xi_j\rangle)}\xi_j^i\\
&=\frac{1}{\beta}\left(\lambda_i-h\delta_{i,l}\right)+\frac{1}{n}\sum\limits_{j=1}^n\tanh(\langle\lambda,\xi_j\rangle)\xi_j^i.
\end{align*}
Rearranging the equality yields the result.
\end{proof}

Moreover we consider
\begin{align*}
C_l^n(\xi):=-D^2\Phi(\lambda_l^n(\xi)) = \frac{1}{\beta} {\rm Id}_{\R^p} - \frac 1n \sum_{i=1}^n \cosh^{-2} \bigl( \langle \lambda_l^n(\xi), \xi_i \rangle \bigr) \xi_i \xi_i^t,
\end{align*}
with $\lambda_l^n(\xi)$ are defined in Propostion \ref{ProPhi}.
\begin{lemma}\label{BoundedDerivativesPhi}~\\
Let $\beta>0$ and $h\geq 0$ such that $(\beta,h)\neq(\beta_c,0)$. Choose an $l\in\Zz$, $l\neq 0$, satisfying $|l|\leq p$ in the case of bounded $p$. Then there exists a constant $c_3>0$ such that
\begin{align*}
\sup\limits_{l\in L}\bigl\Vert C_l^n(\xi)-\frac{1}{\beta}[1-\beta(1-(x^*)^2]{\rm Id}_{\R^p}\bigr\Vert\leq c_3\sqrt{p}\epsilon_n
\end{align*}
for $\Ws_{\xi}$-almost all $\xi$ and all $n\geq n_1(\xi)$.
\end{lemma}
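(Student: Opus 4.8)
The goal is to bound the operator norm of the difference between $C_l^n(\xi) = \frac{1}{\beta}\mathrm{Id}_{\R^p} - \frac1n\sum_i \cosh^{-2}(\langle\lambda_l^n(\xi),\xi_i\rangle)\,\xi_i\xi_i^t$ and the deterministic matrix $\frac{1}{\beta}[1-\beta(1-(x^*)^2)]\mathrm{Id}_{\R^p}$. Rewriting, the quantity to control is
\[
\Bigl\Vert \frac1n\sum_{i=1}^n \cosh^{-2}(\langle\lambda_l^n(\xi),\xi_i\rangle)\,\xi_i\xi_i^t - (1-(x^*)^2)\,\mathrm{Id}_{\R^p}\Bigr\Vert .
\]
The plan is to split this via the triangle inequality into (a) a term where the coefficient $\cosh^{-2}(\langle\lambda_l^n(\xi),\xi_i\rangle)$ is replaced by the constant $\cosh^{-2}(\arctanh(x^*))=1-(x^*)^2$, leaving $(1-(x^*)^2)\bigl\Vert\frac1n\sum_i\xi_i\xi_i^t-\mathrm{Id}_{\R^p}\bigr\Vert = (1-(x^*)^2)\Vert\Sigma^n(\xi)\Vert$, which is $\le \epsilon_n$ by \eqref{AbschPatterns} for $n\ge n_0(\xi)$; and (b) the error from that replacement, namely $\bigl\Vert\frac1n\sum_i\bigl[\cosh^{-2}(\langle\lambda_l^n(\xi),\xi_i\rangle)-(1-(x^*)^2)\bigr]\xi_i\xi_i^t\bigr\Vert$.

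For term (b) I would first observe that the function $t\mapsto\cosh^{-2}(t)$ is Lipschitz on $\R$ with constant bounded by, say, $1$ (since its derivative $-2\tanh t\,\cosh^{-2}t$ is bounded). Hence each coefficient satisfies
\[
\bigl|\cosh^{-2}(\langle\lambda_l^n(\xi),\xi_i\rangle)-(1-(x^*)^2)\bigr|
\le \bigl|\langle\lambda_l^n(\xi),\xi_i\rangle - \langle\arctanh(x^*e_l),\xi_i\rangle\bigr|
\le \Vert\lambda_l^n(\xi)-\arctanh(x^*e_l)\Vert\,\Vert\xi_i\Vert .
\]
Here we use that $\arctanh(x^*e_l)$ has all coordinates zero except the $l$-th, so $\langle\arctanh(x^*e_l),\xi_i\rangle = \arctanh(x^*)\,\xi_i^l$, and $\cosh^{-2}(\arctanh x^*)=1-(x^*)^2$. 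By Proposition~\ref{ProPhi}(2), $\Vert\lambda_l^n(\xi)-\arctanh(x^*e_l)\Vert\le c_2\epsilon_n$ for $n\ge n_1(\xi)$, uniformly in $l\in L$; and $\Vert\xi_i\Vert = \sqrt p$ since each $\xi_i^\mu=\pm1$. To pass from the bound on coefficients to a bound on the operator norm of the matrix sum, I would use $\bigl\Vert\sum_i a_i\xi_i\xi_i^t\bigr\Vert \le \max_i|a_i|\cdot\bigl\Vert\sum_i\xi_i\xi_i^t\bigr\Vert$ when $a_i\ge 0$; since $\cosh^{-2}\ge0$ but $\cosh^{-2}-(1-(x^*)^2)$ can change sign, I would instead bound $\bigl\Vert\frac1n\sum_i a_i\xi_i\xi_i^t\bigr\Vert \le \max_i|a_i|\cdot\bigl\Vert\frac1n\sum_i\xi_i\xi_i^t\bigr\Vert$ by writing the matrix as a difference of two nonnegative-coefficient sums, each controlled by $\max_i|a_i|(1+\Vert\Sigma^n(\xi)\Vert)\le 2\max_i|a_i|$ for large $n$. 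Combining, term (b) is at most $c\cdot(c_2\epsilon_n)\cdot\sqrt p = c'\sqrt p\,\epsilon_n$.

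Putting (a) and (b) together gives a bound of the form $C(1 + \sqrt p)\epsilon_n \le c_3\sqrt p\,\epsilon_n$ (absorbing the $+1$ into $\sqrt p\ge 1$), with $c_3$ depending only on $\beta$, $h$, $c_1$; taking the supremum over $l\in L$ is harmless because all estimates were already uniform in $l$ (Proposition~\ref{ProPhi} supplies an $n_1(\xi)$ independent of $l$, and $\Vert\xi_i\Vert=\sqrt p$ is $l$-free). The main obstacle I anticipate is the step where the pointwise coefficient bound is converted into an operator-norm bound on $\frac1n\sum_i[\cosh^{-2}(\cdot)-(1-(x^*)^2)]\xi_i\xi_i^t$: one must be careful that the $\xi_i\xi_i^t$ are rank-one and highly correlated, so a naive triangle inequality over $i$ would lose a factor $n$; the clean way is the splitting-into-nonnegative-parts trick above, each part dominated by $\max_i|a_i|\cdot\bigl\Vert\frac1n\sum_i\xi_i\xi_i^t\bigr\Vert$, the latter being $\le 1+\epsilon_n\le 2$ by \eqref{AbschPatterns}. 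Everything else is a routine Lipschitz estimate plus the two cited propositions.
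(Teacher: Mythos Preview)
Your proof is correct and follows precisely the approach the paper indicates (the paper does not give a self-contained argument but cites \cite[Lemma~3.2]{Gentz:1996} and lists as ingredients \eqref{AbschPatterns}, Proposition~\ref{ProPhi}, and the identity $\cosh^{-2}(\arctanh x^*)=1-(x^*)^2$, all of which you use in exactly the expected way). Your handling of the possibly sign-changing coefficients via the positive/negative-part split together with the PSD domination $\bigl\Vert\frac1n\sum_i a_i^\pm\xi_i\xi_i^t\bigr\Vert\le\max_i a_i^\pm\cdot\bigl\Vert\frac1n\sum_i\xi_i\xi_i^t\bigr\Vert\le\max_i|a_i|(1+\epsilon_n)$ is the clean way to avoid the factor-$n$ loss you flagged, and the rest is routine.
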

Here $\Vert\cdot\Vert$ denotes the operator norm. The proof of Lemma \ref{BoundedDerivativesPhi} is given in \cite[Lemma 3.2]{Gentz:1996} and uses \eqref{AbschPatterns}, Proposition \ref{PhiTan} and that with \eqref{CWEh} $x^*$ satisfies $\cosh^{-2}\arctanh x^*=1-(x^*)^2$. 

Using the notation 
\begin{align}\label{mi} 
m_i^j(\sigma,\xi) &:= \frac 1n \sum\limits_{\mu=1}^p\sum\limits_{\stackrel{r=1}{r \not= j}}^n \xi_i^{\mu}\xi_r^{\mu}\sigma_r,\\\label{mik}
m_{i}(\sigma,\xi) &:= \frac 1n \sum\limits_{\mu=1}^p\sum\limits_{r =1}^n \xi_i^{\mu}\xi_r^{\mu}\sigma_r
\end{align}
the next lemma states an exact expression for the conditional probability that will occur in the linear regression condition \eqref{regressioncond}.

\begin{lemma} \label{ConDH}~\\
Let $\sigma_i\in\{-1,1\}$. Then we obtain for the conditional distribution of a single spin
\begin{align*}
P_{n,\beta,he_l,\xi}(\sigma_i=t\mid(\sigma_k)_{k\neq i})=\frac{\exp(\beta m_i^i(\sigma,\xi)t+h\xi_i^lt)}{\sum\limits_{k\in\{-1,1\}}\exp(\beta m_i^i(\sigma,\xi)k+h\xi_i^lk)}
\end{align*}
and thus
\begin{align*}
\E[\sigma_i\mid(\sigma_k)_{k\neq i}]=\tanh(\beta m_i^i(\sigma,\xi)+h\xi_i^l),
\end{align*}
where $\E$ denotes the expectation with respect to $P_{n,\beta,he_l,\xi}$.
\end{lemma}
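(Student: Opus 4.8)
The plan is to use that $\Ws_{\sigma}$ is a product of uniform laws on $\{-1,1\}$, so that, directly from \eqref{PNBHXI}, the conditional distribution of $\sigma_i$ given $(\sigma_k)_{k\neq i}$ is proportional (in $t\in\{-1,1\}$) to the value of the density $\exp(-\beta H_n+\langle S_n,he_l\rangle)$ at the configuration whose $i$-th coordinate has been set equal to $t$. It therefore suffices to extract from the exponent precisely the part that depends on $\sigma_i$, and then to normalise over $t\in\{-1,1\}$.

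First I would isolate the $\sigma_i$-dependent part of $-\beta H_n$. Writing $-\beta H_n=\frac{\beta}{2n}\sum_{\mu=1}^p\sum_{r,s=1}^n\xi_r^{\mu}\xi_s^{\mu}\sigma_r\sigma_s$ from \eqref{HHM} and splitting the inner sum over $(r,s)$ into the four cases $\{r=s=i\}$, $\{r=i\neq s\}$, $\{s=i\neq r\}$ and $\{r\neq i,\,s\neq i\}$: the first case contributes $\frac{\beta}{2n}\sum_{\mu=1}^p(\xi_i^{\mu})^2\sigma_i^2=\frac{\beta p}{2n}$, which is independent of $t$ because $(\xi_i^{\mu})^2=\sigma_i^2=1$; the fourth case does not involve $\sigma_i$ at all; and the two middle cases are equal to one another by relabelling $r\leftrightarrow s$ and together equal $\frac{\beta}{n}\sigma_i\sum_{\mu=1}^p\sum_{r\neq i}\xi_i^{\mu}\xi_r^{\mu}\sigma_r=\beta\sigma_i\,m_i^i(\sigma,\xi)$ by the definition \eqref{mi}. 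Note that $m_i^i(\sigma,\xi)$ is measurable with respect to $(\sigma_k)_{k\neq i}$, since the sum in \eqref{mi} excludes the index $i$. Likewise, from \eqref{Hoverlap} one has $\langle S_n,he_l\rangle=h\sum_{r=1}^n\xi_r^l\sigma_r$, whose $\sigma_i$-dependent part is $h\xi_i^l\sigma_i$.

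Collecting these contributions, the unnormalised conditional weight of $\{\sigma_i=t\}$ is a $t$-independent constant times $\exp(\beta m_i^i(\sigma,\xi)t+h\xi_i^lt)$, and dividing by the sum over $t\in\{-1,1\}$ gives the asserted formula for $P_{n,\beta,he_l,\xi}(\sigma_i=t\mid(\sigma_k)_{k\neq i})$. For the conditional expectation, I would then compute $\E[\sigma_i\mid(\sigma_k)_{k\neq i}]=\sum_{t\in\{-1,1\}}t\,P_{n,\beta,he_l,\xi}(\sigma_i=t\mid(\sigma_k)_{k\neq i})$, which equals $(e^{a}-e^{-a})/(e^{a}+e^{-a})=\tanh(a)$ with $a:=\beta m_i^i(\sigma,\xi)+h\xi_i^l$.

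No step presents a genuine difficulty; the only place requiring care is the combinatorial split of the double sum, in particular checking that the diagonal contribution $r=s=i$ drops out as a $t$-independent constant (this is exactly where $\sigma_i^2=1$ and $(\xi_i^{\mu})^2=1$ are used) and that the factor $2$ produced by the two symmetric cross terms cancels the $\tfrac12$ in the normalisation of $H_n$, so that one obtains $\beta m_i^i(\sigma,\xi)$ and not $\tfrac{\beta}{2}m_i^i(\sigma,\xi)$.
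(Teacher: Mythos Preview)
Your proposal is correct and follows essentially the same approach as the paper: both arguments split the double sum in $-\beta H_n$ into the diagonal term $r=s=i$, the two symmetric cross terms, and the $\sigma_i$-independent block, identify the $\sigma_i$-dependent part of the exponent as $\beta m_i^i(\sigma,\xi)\sigma_i+h\xi_i^l\sigma_i$, and then normalise over $t\in\{-1,1\}$ to obtain the conditional law and hence the $\tanh$ expression for the conditional expectation. The only cosmetic difference is that the paper writes out the full ratio of joint weights and cancels common factors explicitly, whereas you argue proportionality first; the content is the same.
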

\begin{proof}
Direct calculations yield\\~\\
$P_{n,\beta,he_l,\xi}(\sigma_i=t\mid(\sigma_k)_{k\neq i})$
\begin{align*}
&=\frac{P_{n,\beta,he_l,\xi}(\{\sigma_i=t\}\cap(\sigma_k)_{k\neq i})}{P_{n,\beta,he_l,\xi}((\sigma_k)_{k\neq i})}\\
&=\frac{\exp\biggl[\frac{\beta}{2n}\sum\limits_{\mu=1}^p(\xi_i^{\mu})^2+\frac{2\beta}{2n}\sum\limits_{\mu=1}^p\sum\limits_{\stackrel{j=1}{j\neq i}}^n\xi_i^{\mu}\xi_j^{\mu}\sigma_jt+\frac{\beta}{2n}\sum\limits_{\mu=1}^p\sum\limits_{\stackrel{k,j=1}{k,j\neq i}}^n\xi_k^{\mu}\xi_j^{\mu}\sigma_j\sigma_k+h\xi_i^lt+h\sum\limits_{\stackrel{j=1}{j\neq i}}^n\xi_j^l\sigma_j\biggr]}{\sum\limits_{k\in\{-1,1\}}\exp\biggl[\frac{\beta p}{2n}+\frac{2\beta}{2n}\sum\limits_{\mu=1}^p\sum\limits_{\stackrel{j=1}{j\neq i}}^n\xi_i^{\mu}\xi_j^{\mu}\sigma_jk+\frac{\beta}{2n}\sum\limits_{\mu=1}^p\sum\limits_{\stackrel{k,j=1}{k,j\neq i}}^n\xi_k^{\mu}\xi_j^{\mu}\sigma_j\sigma_k+h\xi_i^lk+h\sum\limits_{\stackrel{j=1}{j\neq i}}^n\xi_j^l\sigma_j\biggr]}\\
&=\frac{\exp(\beta m_i^i(\sigma,\xi)t+h\xi_i^lt)}{\sum\limits_{k\in\{-1,1\}}\exp(\beta m_i^i(\sigma,\xi)k+h\xi_i^lk)},
\end{align*}
where we canceled equivalent expressions in numerator and denominator and used the expression for $m_i^i(\sigma,\xi)$. Thus
\begin{align*}
\E[\sigma_i\mid(\sigma_k)_{k\neq i}]&=P(\{\sigma_i=1\}\cup(\sigma_k)_{k\neq i})-P(\{\sigma_i=-1\}\cup(\sigma_k)_{k\neq i})\\
&=\frac{\exp(\beta m_i^i(\sigma,\xi)+h\xi_i^l)-\exp(-\beta m_i^i(\sigma,\xi)-h\xi_i^l)}{\exp(\beta m_i^i(\sigma,\xi)+h\xi_i^l)+\exp(-\beta m_i^i(\sigma,\xi)-h\xi_i^l)}\\
&=\tanh(\beta m_i^i(\sigma,\xi)+h\xi_i^l).
\end{align*}
\end{proof}

Higher order moments of the rescaled empirical spin vector of the Hopfield model, appearing in Theorems \ref{THHUM} up to \ref{THHMM}, can be bounded as follows:
\begin{lemma}\label{EndlMoH}~\\
For $W$ as in Theorems \ref{THHUM} up to  \ref{THHMM} we obtain that for any $l\in \N$ and $j\in\{1,\ldots,p\}$
\begin{align*}
\E \big| W_j^l \big| \leq \text{const.} (l). 
\end{align*}
\end{lemma}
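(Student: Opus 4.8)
The plan is to reduce to even moments — since $|W_j^l|=|W_j|^l$ and $\E|W_j|^l\le(\E|W_j|^{2\lceil l/2\rceil})^{l/(2\lceil l/2\rceil)}$ by Jensen, it suffices to bound $\E[W_j^{2k}]$ for each $k\in\N$, uniformly in $n$ and in the coordinate $j$ — and then to exploit the Hubbard--Stratonovich representation already built into the function $\Phi$ of \eqref{PHI}. Introducing the Gaussian variable $\lambda\in\R^p$ conjugate to $\tfrac{\beta}{2n}\|S_n\|^2$, one rewrites, for any bounded $g$,
\begin{align*}
\E\bigl[g(W)\bigr]=\frac{\int_{\R^p}e^{n\Phi(\lambda)}\,\E_\lambda\bigl[g(W)\bigr]\,\d\lambda}{\int_{\R^p}e^{n\Phi(\lambda)}\,\d\lambda},
\end{align*}
where $\E_\lambda$ is expectation under the product measure on $\{-1,1\}^n$ making the $\sigma_i$ independent with $\E_\lambda[\sigma_i]=\tanh\langle\lambda,\xi_i\rangle$. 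The problem thus splits into an ``annealed'' moment estimate under each $\E_\lambda$ and a Laplace-type estimate for the $\lambda$-integral.

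Put $\psi_j(\lambda):=\tfrac1n\sum_{i=1}^n\xi_i^j\tanh\langle\lambda,\xi_i\rangle$ and write
\begin{align*}
W_j=\frac1{\sqrt n}\sum_{i=1}^n\xi_i^j\bigl(\sigma_i-\tanh\langle\lambda,\xi_i\rangle\bigr)+\sqrt n\,\bigl(\psi_j(\lambda)-x_j\bigr).
\end{align*}
Under $\E_\lambda$ the first summand is a sum of independent, centred variables bounded by $2$, so Rosenthal-type inequalities bound its $l$-th absolute moment by $C(l)$, uniformly in $\lambda$ and $\xi$. For the second summand, Lemma~\ref{PhiTan} together with $\nabla\Phi(\lambda_l^n(\xi))=0$ (Proposition~\ref{ProPhi}) gives $x_j=\psi_j(\lambda_l^n(\xi))$, so it equals $\sqrt n(\psi_j(\lambda)-\psi_j(\lambda_l^n(\xi)))$; a first-order Taylor expansion, controlling $D\psi$ on the ball $\overline{B_{r_1}(\arctanh(x^*e_l))}$ of Proposition~\ref{ProPhi} via Lemma~\ref{BoundedDerivativesPhi} (which makes it a perturbation of $(1-(x^*)^2){\rm Id}$ with operator-norm error $O(\sqrt p\,\epsilon_n)$), yields there
\begin{align*}
\sqrt n\,\bigl|\psi_j(\lambda)-x_j\bigr|\le C\sqrt n\,\bigl|\lambda_j-(\lambda_l^n(\xi))_j\bigr|+C\sqrt p\,\epsilon_n\,\sqrt n\,\bigl\|\lambda-\lambda_l^n(\xi)\bigr\|.
\end{align*}

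It remains to integrate over $\lambda$. On the ball, Proposition~\ref{ProPhi}(1) gives $\Phi(\lambda)\le\Phi(\lambda_l^n(\xi))-\tfrac{c_1}2\|\lambda-\lambda_l^n(\xi)\|^2$, so the renormalised measure $e^{n\Phi(\lambda)}\d\lambda$ restricted there is dominated, up to a fixed constant, by a Gaussian of covariance of order $(nc_1)^{-1}{\rm Id}$; hence $\E[(\sqrt n|\lambda_j-(\lambda_l^n(\xi))_j|)^l]\le C(l)$ and $\E[(\sqrt n\|\lambda-\lambda_l^n(\xi)\|)^l]\le C(l)p^{l/2}$, and the contribution of the ball to $\E|W_j|^l$ is $\le C(l)(1+(p\,\epsilon_n)^l)$. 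Outside the ball, $\psi_j$ and $x_j$ are bounded by $1$, so $|W_j|\le 2\sqrt n$ there and the contribution is at most $(2\sqrt n)^l$ times the $e^{n\Phi}$-mass of that region relative to the normaliser, which is exponentially small in $n$ because, $\Ws_\xi$-a.s.\ and for $n\ge n_1(\xi)$, $\Phi$ attains its maximum on the ball strictly above its supremum on the complement. Combining these, and using that $p\,\epsilon_n\to0$ whenever the rates in Theorems~\ref{THHUM}--\ref{THHMM} are not vacuous, gives $\E[W_j^{2k}]\le C(2k)$ for all large $n$, uniformly in $j$, as claimed. The main obstacle is this last ``large-field'' step: proving, uniformly in $\xi$, that the off-ball mass of $e^{n\Phi}$ is exponentially negligible — the local quadratic behaviour of $\Phi$ and the bound \eqref{AbschPatterns} only control the ball, and one additionally needs coercivity of $\Phi$ away from $\lambda_l^n(\xi)$, the analogue of the estimates underpinning the law of large numbers in \cite{Bovier/Gayrard/Picco:1994}. (Alternatively one could iterate a Stein-type recursion in $k$ from the exchangeable pair of Section~4, using that the matrix $\Lambda$ in \eqref{regressioncond} is positive definite so the self-interaction term has a favourable sign; but bounding the second-order remainder of $\tanh$ there needs the same input, so the route above is more transparent.)
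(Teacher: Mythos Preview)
Your route is sound but genuinely different from the paper's. You split $W_j$ under the $\lambda$-mixture into an independent-sum fluctuation (handled by Rosenthal) and a bias $\sqrt n(\psi_j(\lambda)-x_j)$ (handled by Taylor-expanding $\psi_j$). The paper avoids this decomposition entirely by convolving $W$ with an \emph{independent} Gaussian $V\sim\mathcal N(0,\beta^{-1}{\rm Id}_p)$ and computing the density of $V+W$ exactly: the Hubbard--Stratonovich identity, run in the reverse direction, gives
\[
\mathcal L\!\left(V+\sqrt n\Bigl(\tfrac{S_n}{n}-x\Bigr)\right)\ \propto\ \exp\!\Bigl[n\,\Phi\Bigl(\lambda_l^n(\xi)+\tfrac{\beta z}{\sqrt n}\Bigr)\Bigr]\,\d z,
\]
so Taylor expansion of $\Phi$ at its maximum, together with the uniform positive definiteness of $-D^2\Phi$ from Proposition~\ref{ProPhi}(1), makes this density Gaussian to leading order and hence gives all moments of $V+W$; those of $W=(V+W)-V$ follow. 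The convolution absorbs the $\sigma$-fluctuation into $V$, so no Rosenthal step and no expansion of $\psi_j$ is needed.

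The obstacle you single out --- exponential negligibility of the $e^{n\Phi}$-mass off the ball $\overline{B_{r_1}(\arctanh(x^*e_l))}$ --- is genuine, and the paper's argument needs exactly the same input (it is hidden in the phrase ``up to negligible terms'' when passing from the exact density to the quadratic approximation). Both proofs implicitly rely on the global analysis of $\Phi$ underlying the law of large numbers/LDP in \cite{Bovier/Gayrard/Picco:1994,Bovier/Gayrard:1996}. So your proof would close once that is supplied; the paper's version is simply shorter because one density computation replaces your two separate estimates.
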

\begin{proof}
First we will have to make a transformation with the well-known Hubbard-Stratonovich approach, expressing the distribution
of $S_n$ in the Hopfield model in terms of $\Phi$. This approach was for example used in \cite[Lemma 2.2]{Bovier/Gayrard/Picco:1994} and in \cite{Eichelsbacher/Loewe:2004}.
Let ${\rm Id}$ denote the $p\times p$ identity matrix and for $\beta >0$ and $h\geq 0$ we pick a random vector $V$ in a way 
that $\Ll(V)$ equals a $p$-dimensional centered Gaussian vector with covariance matrix $\beta^{-1} {\rm Id}$ and $V$ is chosen to be independent from all other random variables involved. Additionally $\lambda:=\lambda_l^n(\xi)$ denotes the maximum point of $\Phi$ taken from Proposition \ref{ProPhi}. First we note that
\begin{align*}
P_{n,\beta,he_l,\xi}\left( S_n\in \text{d}y\right)=Z_{n,\beta,he_l,\xi}^{-1}\exp\left(\frac{\beta}{2n}\langle y,y\rangle+\langle y,he_l\rangle\right)P_n(S_n\in \text{d}y),
\end{align*}
where $P_n(S_n\in \text{d}y)=\prod\limits_{i=1}^n\rho(\text{d}\sigma_i)$ and $\rho(\text{d}\sigma_i)=\frac 12\delta_{-1}(\text{d}\sigma_i)+\frac 12\delta_{1}(\text{d}\sigma_i)$. Furthermore for $u\in\R^p$ we have
\begin{align}\nonumber
\int\limits_{\R^p}\exp\left(\frac{\beta}{n}\langle u,y\rangle+\langle y,he_l\rangle\right)P_n(S_n\in \text{d}y) 
=\int\limits_{\R^p}\exp\left(\frac{\beta}{n}\sum\limits_{\mu=1}^p\sum\limits_{j=1}^n\xi_j^{\mu}\sigma_ju_{\mu}+\sum\limits_{\mu=1}^p\sum\limits_{j=1}^n\xi_j^{\mu}\sigma_jhe_l^{\mu}\right)\prod\limits_{i=1}^n\rho(\text{d}\sigma_i)\\\nonumber
=\prod\limits_{i=1}^n\int\limits_{\R}\exp\left(\frac{\beta}{n}\langle \xi_i^{\cdot}\sigma_i,u\rangle+\langle \xi_i^{\cdot}\sigma_i,he_l\rangle\right)\rho(\text{d}\sigma_i) 
=\exp\left(\sum\limits_{i=1}^n\log\cosh\langle \xi_i^{\cdot},\frac{\beta u}{n}+he_l\rangle\right).
\end{align}
Hence, for $t\in\R$, $x:=x_l^n(\xi)$ and $A(n)=\sqrt{n}t+nx$ we obtain
\begin{align*}
P\biggl(V+\sqrt{n}&\left(\frac{S_n}{n}-x\right)\leq t\biggr)\\
&=P(\sqrt{n}V+S_n\leq A(n))\\
&=Z_{n,\beta,he_l,\xi}^{-1}\int\limits_{\R^p}\exp\left(\frac{\beta}{2n}\langle y,y\rangle+\langle y,he_l\rangle\right)\\&~~~~~~~~~~\cdot\int\limits_{v\leq A(n)-y}\left(\frac{\beta}{2\pi n}\right)^{p/2}\exp\left(-\frac{\beta}{2n}\langle v,v\rangle\right)dvP_n(S_n\in \text{d}y)
\end{align*}
The substitution $u=v+y$ and abbreviating $C_{p,n}:=Z_{n,\beta,he_l,\xi}^{-1}\left(\frac{\beta}{2\pi n}\right)^{p/2}$ yields
\begin{align*}
P(V&+\sqrt{n}\left(\frac{S_n}{n}-x\right)\leq t)\\
&=C_{p,n}\int\limits_{\R^p}\exp\left(\langle y,he_l\rangle\right)\int\limits_{u\leq A(n)}\exp\left(-\frac{\beta}{2n}\langle u,u\rangle\right)\exp\left(\frac{\beta}{n}\langle u,y\rangle\right)duP_n(S_n\in \text{d}y).
\end{align*}
The abbreviation $\tilde C_{p,n}=C_{p,n}n^{p/2}$ yields
\begin{align*}
P\biggl(V&+\sqrt{n}\left(\frac{S_n}{n}-x\right)\leq t\biggr)\\
&=C_{p,n}\int\limits_{u\leq A(n)}\exp\left(-\frac{\beta}{2n}\langle u,u\rangle+\sum\limits_{i=1}^n\log\cosh\langle \xi_j^{\cdot},\frac{\beta u}{n}+he_l\rangle\right)du\\
&=\tilde C_{p,n}\int\limits_{z\leq t}\exp\biggl(-\frac{\beta}{2n}\langle\sqrt{n}z+n\lambda-nhe_l, \sqrt{n}z+n\lambda-nhe_l\rangle\\
&~~~~+\sum\limits_{i=1}^n\log\cosh\langle \xi_j^{\cdot},\frac{\beta z}{\sqrt{n}}+\lambda-he_l+he_l\rangle\biggr)dz\\
&=\tilde C_{p,n}\int\limits_{z\leq t}\exp\left(n\Phi\left(\frac{\beta z}{\sqrt{n}}+\lambda\right)\right)dz,
\end{align*}
where we used the substitution $u=\sqrt{n}z+nx$ for the second equality. Thus, we have
\begin{align}\label{newden}
\Ll\left(V+\sqrt{n}\left(\frac{S_n}{n}-x\right)\right) 
&= \tilde Z_{n,\beta,he_l,\xi}^{-1} \exp\left[n \Phi\left(\lambda+\frac{\beta x}{n}\right)\right]dx,
\end{align}
where $\tilde Z_{n,\beta,he_l,\xi}^{-1} $ denotes a normalization. Applying this transformation does not change the finiteness of any of the moments of the $W_j$. Thus the new measure has the density \eqref{newden}. Using second-order multivariate Taylor expansion of $\Phi$ (see \eqref{taylorHop}) and the fact that $\lambda$ is a maximum point of $\Phi$ we see that the density of this new measure with respect to the Lebesgue measure is given by 
\begin{align*}
\text{const.} \exp\left[- \frac{1}{2} \langle y,-D^2 \Phi(\lambda) \, y \rangle \right]
\end{align*}
 (up to negligible terms). With Proposition \ref{ProPhi} (a) we know that for any $(\beta,h)\neq (\beta_c,0)$ the Hessian $-D^2 \Phi(\lambda)$ is uniformly positive definite. This fact combined with the transformation of integrals yields that a measure with this density has moments of any finite order.
\end{proof}

\section{Proofs of the Theorems}

Constructing an exchangeable pair in the Hopfield model to obtain an approximate linear regression property \eqref{regressioncond}
leads us to $\Phi$ taken from \eqref{PHI}.
Let $(\beta,h)\neq(\beta_c,0)$, and let $x:=x_l^n(\xi)$ denote the unique global maximum point of $\Phi$, see Proposition \ref{ProPhi}.
For $k\in\N$ fixed, $k\leq p$, we consider
$$
W:=\sqrt{n}\pi_k\left(\frac{S_n}{n}-x\right) = \sqrt{n} \bigl( \frac{1}{n}\sum\limits_{j=1}^n\xi_j^1\sigma_j-x_1, \ldots, \frac{1}{n}\sum\limits_{j=1}^n\xi_j^k\sigma_j-x_k
\bigr)^t.
$$
We start by constructing an exchangeable pair. Therefore we produce a spin collection $\sigma'=(\sigma'_i)_{i\geq 1}$ via a {\it Gibbs sampling procedure}: 
We take $I$ to be a random variable that is uniformly distributed over $\{1,\ldots,n\}$ and independent from all other random variables involved. Exchanging the spin $\sigma_i$ with $\sigma'_i$ drawn from the conditional distribution of the $i^{\text{th}}$ coordinate given $(\sigma_j)_{j\neq i}$ under $P_{n,\beta,he_l,\xi}$, independently from $\sigma_i$, we obtain
\begin{equation} \label{WStrH}
W' := W+\frac{1}{\sqrt{n}}\bigl( \xi_I^1\sigma'_I, \ldots, \xi_I^k\sigma'_I \bigr) -\frac{1}{\sqrt{n}}
\bigl( \xi_I^1\sigma_I, \ldots, \xi_I^k\sigma_I \bigr).
\end{equation}
In this case $(W,W')$ is an exchangeable pair. Let $\F:=\sigma(\sigma_i,\xi_j^{\mu}|i,j,\mu\in\N)$. We obtain that for any $i=1, \ldots, k$:
\begin{eqnarray*}
 \E [ W_i' - W_i | \F ] & = &\frac{1}{\sqrt{n}}\E\left[\xi_I^i\sigma'_I-\xi_I^i\sigma_I| \F\right].
 \end{eqnarray*}
 Using the law of total probability for the conditional expectation and independence we have
 \begin{eqnarray*}
 \E [ W_i' - W_i | \F ]
&=&\frac{1}{\sqrt{n}}\frac{1}{n}\sum\limits_{j=1}^n\E\left[\xi_j^i\sigma'_j-\xi_j^i\sigma_j| \F\right].
\end{eqnarray*}
Since $\sigma_i$ and $\xi_j^i$, $i,j\in\N$, are measurable with respect to $\F$ we obtain
\begin{eqnarray*}
\E [ W_i' - W_i | \F ]
&=&-\frac{1}{\sqrt{n}}\frac{1}{n}S_{n,i}+\frac{1}{\sqrt{n}}\frac{1}{n}\sum\limits_{j=1}^n\xi_j^i\E\left[\sigma_j'| \F\right].
\end{eqnarray*}
With the help of independence and the construction of the exchangeable pair we obtain
$\E\left[\sigma_j'| \F\right]=\E\left[\sigma_j'| \sigma_1,\ldots,\sigma_n\right]=\E\left[\sigma_j| (\sigma_k)_{k\neq j}\right]$.
Applying Lemma \ref{ConDH} yields
\begin{align*}
\E [ W_i' - W_i | \F ] & = -\frac{1}{\sqrt{n}}\frac{1}{n}S_{n,i}+\frac{1}{\sqrt{n}}\frac{1}{n}\sum\limits_{j=1}^n\xi_j^i\tanh(\beta m_j^j(\sigma,\xi)+h\xi_j^l)\nonumber\\
&=-\frac{1}{\sqrt{n}}\frac{1}{n}S_{n,i}+\frac{1}{\sqrt{n}}\frac{1}{n}\sum\limits_{j=1}^n\xi_j^i\tanh(\beta m_j(\sigma,\xi)+h\xi_j^l)+R_{1,i},
\end{align*}
with
\begin{align}
R_{1,i}:=\frac{1}{\sqrt{n}}\frac{1}{n}\sum\limits_{j=1}^n\xi_j^i\left[\tanh(\beta m_j^j(\sigma,\xi)+h\xi_j^l)-\tanh(\beta m_j(\sigma,\xi)+h\xi_j^l)\right].\label{R1iH}
\end{align} 
Now it is important to note that
\begin{align*}
\tanh(\beta m_j(\sigma,\xi)+h\xi_j^l)=\tanh\langle \beta\frac{S_n}{n}+he_l,\xi_j\rangle.
\end{align*}
Thus, with Lemma \ref{PhiTan}, we have
\begin{align*}
\frac{1}{n}\sum\limits_{j=1}^n\xi_j^i\tanh(\beta m_j(\sigma,\xi)+h\xi_j^l)=\frac{1}{\beta}\left(\beta\frac{S_{n,i}}{n}+h\delta_{i,l}-h\delta_{i,l}\right)+\frac{\partial}{\partial \lambda_i}\Phi\left(\beta\frac{S_n}{n}+he_l\right).
\end{align*}
This equation yields
\begin{align}\label{Mot2H}
\E [ W_i' - W_i | \F ] &=\frac{1}{\sqrt{n}}\frac{\partial}{\partial \lambda_i}\Phi\left(\beta\frac{S_n}{n}+he_l\right)+R_{1,i}.
\end{align}
We continue by applying \eqref{XLN} and \eqref{taylorsecondH} (see Appendix) to the first summand in \eqref{Mot2H}.  Since $\lambda_l^n(\xi)$ is a unique maximum point of $ \Phi(\lambda)$ we have $\frac{\partial}{\partial \lambda_i} \Phi (\lambda_l^n(\xi))=0$.
 We also note that $\frac{\beta S_{n,i}}{n}+h\delta_{i,l}-(\lambda_l^n(\xi))_i=\beta\frac{W_i}{\sqrt{n}}$.
Thus, the first summand in \eqref{Mot2H} is equal to 
\begin{align*}
\frac{1}{\sqrt{n}} \sum \limits_{t=1}^k \left( \frac{\partial^2}{\partial \lambda_i\partial \lambda_t} \Phi (\lambda_l^n(\xi))\right)  \, \frac{\beta W_t}{\sqrt{n}} + R_{2,i},
\end{align*}
with 
\begin{equation} \label{R2iH}
R_{2,i} : =\sum \limits_{t=k}^p \left( \frac{\partial^2}{\partial \lambda_i\partial \lambda_t} \Phi (\lambda_l^n(\xi))\right)  \, \frac{\beta W_t}{n}+\sum\limits_{l,t=1}^p\Oo\left(\frac{1}{\sqrt{n}}\frac{W_l}{\sqrt{n}}\frac{W_t}{\sqrt{n}}\right).
\end{equation}
\allowdisplaybreaks
Abbreviating 
\begin{align}\label{RiH}
R(i) &:= R_{1,i} + R_{2,i},
\end{align}
 we have
\begin{eqnarray} 
\E\left[W'_i-W_i\mid \F\right] & = &\frac{1}{n} \sum \limits_{t=1 }^k\left( \frac{\partial^2}{\partial \lambda_i\partial \lambda_t} \Phi (\lambda_l^n(\xi))\right)  \, \beta W_t + R(i) \nonumber \\
& = &  \frac{\beta}{n} \langle \bigl[ D^2 \Phi (\lambda_l^n(\xi)) \bigr]_{i,k} , W \rangle + R(i), 
\end{eqnarray}
where $\langle \cdot, \cdot \rangle$ denotes the Euclidean scalar product and $\bigl[D^2 \Phi(\lambda_l^n(\xi))\bigr]_{i,k}$ denotes the first $k$ entries of the $i^{\text{th}}$ row of the matrix $D^2 \Phi(\lambda_l^n(\xi))$. We obtain
\begin{equation} \label{hzeroH}
\E\left[W'-W\mid \F\right] = \frac{\beta}{ n} \bigl[ D^2 \Phi (\lambda_l^n(\xi)) \bigr]_{\mid k\times k} W + R(W),
\end{equation}
with  $R(W) = (R(1), \ldots, R(k))$. We define $\Lambda := \frac{\beta}{ n} \bigl[ -D^2 \Phi (\lambda_l^n(\xi)) \bigr]_{\mid k\times k}$. With Proposition \ref{ProPhi}(a) $-D^2 \Phi (\lambda_l^n(\xi))$ is uniformly positive definite and thus $\Lambda$ is invertible. We conducted the linear regression condition for the sigma-algebra $\F$ but it should be noted that it yields also the linear regression condition for the sigma-algebra generated by $W$ since $W$ is measurable with respect to $\F$. In this case the linear regression condition \eqref{regressioncond} is fulfilled.

\begin{proof}[Proof of Theorem \ref{THHUM}]
With \eqref{hzeroH} we are able to apply Theorem \ref{RR}. Since the Hessian matrix of $\Phi$ and $\beta$ itself are constants we have $\lambda^{(i)}=\Oo(n)$. We continue by estimating $C$ taken from Theorem \ref{RR}. We start by giving a bound for $R_{1,i}$, defined in \eqref{R1iH}. Since the $\tanh(x)$ is $1$-Lipschitz we obtain
\begin{align*}
|R_{1,i}|&=\left|\frac{1}{\sqrt{n}}\frac{1}{n}\sum\limits_{j=1}^n\xi_j^i\left[\tanh(\beta m_j^j(\sigma,\xi)+h\xi_j^l)-\tanh(\beta m_j(\sigma,\xi)+h\xi_j^l)\right]\right|\\
&\leq\frac{1}{\sqrt{n}}\frac{1}{n}\sum\limits_{j=1}^n\left|\beta m_j^j(\sigma,\xi)+h\xi_j^l-(\beta m_j(\sigma,\xi)+h\xi_j^l)\right|\\
&=\frac{\beta}{\sqrt{n}}\frac{1}{n}\sum\limits_{j=1}^n\left|\frac{1}{n}\sum\limits_{\mu=1}^p\left(\xi_j^{\mu}\right)^2\sigma_j\right|\\
&=\frac{\beta}{\sqrt{n}}\frac{1}{n}\sum\limits_{j=1}^n\left|\frac{1}{n}p\sigma_j\right| \leq\frac{\beta p}{\sqrt{n}}\frac{1}{n}.
\end{align*}
For the estimation of $R_{2,i}$ we note that by Lemma \ref{EndlMoH} we have for the second part of \eqref{R2iH}
\begin{align*}
\E\left[\sum\limits_{l,t=1}^p\Oo\left(\frac{1}{\sqrt{n}}\frac{W_l}{\sqrt{n}}\frac{W_t}{\sqrt{n}}\right)\right]=\Oo\left[\frac{p^2}{n^{3/2}}\right].
\end{align*}
 For the first part of \eqref{R2iH} we note that by Lemma \ref{BoundedDerivativesPhi}, since $i\notin\{k+1,\ldots,p\}$ and $t\in\{1,\ldots,k\}$,
 \begin{align*}
 \left|\frac{\partial^2}{\partial \lambda_i\partial \lambda_t} \Phi (\lambda_l^n(\xi))\right|&\leq c_3\sqrt{p}\epsilon_n
 \end{align*}
 since this expression is a non-diagonal entry of the matrix $-C_l^n(\xi)$. Thus we obtain that
 \begin{align*}
 \E\left[\sum \limits_{t=k}^p \left( \frac{\partial^2}{\partial \lambda_i\partial \lambda_t} \Phi (\lambda_l^n(\xi))\right)  \, \frac{\beta W_t}{n}\right]=\Oo\left[\frac{p\sqrt{p}\epsilon_n}{n}\right],
 \end{align*}
 and finally
\begin{align}\label{BoundRH}
\E |R_{2,i}|=\Oo\left[\max\left\{\frac{p\sqrt{p}\epsilon_n}{n},\frac{p^2}{n^{3/2}}\right\}\right].
\end{align}
 Thus we have 
 \begin{align*} C=\sum\limits_{i=1}^k\lambda^{(i)}\sqrt{\E\left[R(i)^2\right]}=\Oo\left[\max\left\{p\sqrt{p}\epsilon_n,\frac{p^2}{n^{1/2}}\right\}\right].
 \end{align*}
The next thing we notice is that for all $i\in\{1,\ldots,k\}$
\begin{align*}
|W_i'-W_i| =\bigg| \frac{1}{\sqrt{n}}\xi_I^i(\sigma'_I-\sigma_I) \bigg| \leq \frac{1}{\sqrt{n}}.
\end{align*}
 We easily obtain that the bound $B=\Oo(n^{-1/2})$. The only thing left to do is to calculate the tedious conditional variance in $A$. We have:
 \begin{eqnarray} \label{ais}
 \E[(W_i'-W_i)(W_j'-W_j) \mid \F]&=&\frac{1}{n^3}\sum\limits_{t,r=1}^n\xi_t^i\sigma_t\xi_r^j\sigma_r+\frac{1}{n^3}\sum\limits_{t,r=1}^n\E[\xi_t^i\sigma'_t\xi_r^j\sigma'_r\mid \F] \nonumber\\
 &-& \frac{2}{n^3}\sum\limits_{t,r=1}^n\xi_r^j\xi_t^i\sigma_r\E[\sigma'_t\mid \F]\nonumber\\
&=:&A_1+A_2+A_3.\label{AiH}
\end{eqnarray}
To bound the variances of these three terms we abbreviate
\begin{align*}
\tilde m_i(\sigma,\xi):=\frac{1}{n}\sum\limits_{t=1}^n\xi_t^i\sigma_t=\frac{1}{\sqrt{n}}W_i+x_i.
\end{align*}
Thus,
\begin{eqnarray*}
\V[A_1]&=&\frac{1}{n^2}\V\bigl[\tilde m_i(\sigma)\tilde m_j(\sigma)\bigr] 
 = \frac{1}{n^2}\V \biggl[ \frac{W_i \, W_j}{n} + \frac{W_i}{\sqrt{n}} x_j + \frac{W_j}{\sqrt{n}} x_i \biggr] \\& \leq & \frac{1}{n^2}\text{const.} \max \biggl\{ \frac{1}{n^2} \V\bigl[W_i \, W_j\bigr], \frac 1n \V\bigl[W_i\bigr] \biggr\}\\
& \leq& \frac{1}{n^2}\frac{\text{const.}}{n^2} \bigl( \E [ W_i^2 W_j^2] + n \E [W_i] \bigr).
\end{eqnarray*}
Using Lemma \ref{EndlMoH} we obtain $\V[A_1]=\Oo(n^{-3})$. For $A_2$ we obtain
\begin{align*}
A_2=\frac{1}{n^3}\sum\limits_{t,r=1}^n\E\left[\xi_t^i\sigma'_t\xi_r^j\sigma'_r|\F\right]=\frac{1}{n}\E\left[\left(\frac{1}{n}\sum\limits_{t=1}^n\xi_t^i\sigma'_t\right)\left(\frac{1}{n}\sum\limits_{r=1}^n\xi_r^j\sigma'_r\right)|\F\right].
\end{align*}
Next we use the identity $\V[X]=\E[X^2]-(\E[X])^2$ for a random variable $X$ and a conditional version of Jensen's inequality 
in order to obtain that $\V\left[A_2\right]\leq \V\left[A_1\right]=\Oo(n^{-3})$, since $\sigma'$ is an identical copy of $\sigma$.
With Lemma \ref{ConDH} we get
\begin{eqnarray}
- A_3/2  & =&  \frac{1}{n^3} \sum\limits_{t,r=1}^n \xi_r^j\sigma_r\, \E[\xi_t^i\sigma'_t\mid \F] \nonumber\\
&=&\frac{1}{n^3}\sum\limits_{t,r=1}^n\xi_r^j\sigma_r \xi_t^i\tanh(m_t^t(\sigma,\xi)+h\xi_t^l) \nonumber\\
&=& \frac{1}{n^3}\sum\limits_{t,r=1}^n\xi_r^j\sigma_r \xi_t^i\left[\tanh(m_t^t(\sigma,\xi)+h\xi_t^l)-\tanh(m_t(\sigma,\xi)+h\xi_t^l)\right]\nonumber\\
&&+\frac{1}{n^3}\sum\limits_{t,r=1}^n\xi_r^j\sigma_r\xi_t^i\tanh(m_t(\sigma,\xi)+h\xi_t^l)\nonumber\\
& = :&M_1+M_2.\label{MiH}
\end{eqnarray}
Using the same estimations as for $R_n^{(1)}(i)$ we obtain
\begin{align*}
M_1 \leq \left|\frac{1}{n^2} \sum \limits_{r=1}^n \xi_r^j\sigma_r\right|  \left|\frac{\beta p}{n}\right| = \left|\frac{1}{n} \, \beta p \biggl(\frac{W_j}{\sqrt{n}}+x_j\biggr)\right|.
\end{align*}
Hence $\V[M_1]= \Oo\left[\frac{p^2}{n^3}\right]$ by Lemma \ref{EndlMoH}. Additionally we get by using Lemma \ref{PhiTan}, \eqref{taylorsecondH} and the abbreviation $\Phi^{(2),i,j}(\lambda):=\frac{\partial^2}{\partial \lambda_i\partial \lambda_t} \Phi (\lambda_l^n(\xi))$
\begin{align*}
M_2&=\frac{1}{n}\left(\frac{W_j}{\sqrt{n}}+x_j\right)\left(\frac{W_i}{\sqrt{n}}+x_i+\frac{\partial}{\partial \lambda_i}\Phi\left(\beta\frac{S_n}{n}+he_l\right)\right)\\
&=\left(\frac{W_j}{n\sqrt{n}}+\frac{x_j}{n}\right)\left(\frac{W_i}{\sqrt{n}}+x_i+\sum \limits_{t=1}^p \left( \Phi^{(2),i,t}(\lambda)\right)  \, \frac{\beta W_t}{\sqrt{n}}+\sum\limits_{l,t=1}^p\Oo\left[\frac{W_lW_t}{n}\right]\right).
\end{align*}
Since we are estimating the variance of the expressions, constant expressions will vanish. Hence using Lemma \ref{EndlMoH} and Lemma \ref{BoundedDerivativesPhi} in the same way as for \eqref{BoundRH} we have 
\begin{align*}
\V[M_2]= \Oo\left[\max\left\{\frac{p^3\epsilon_n^2}{n^3},\frac{p^2}{n^3}\right\}\right].
\end{align*} 
Therefore $\V[A_3]$ can be bounded by $\Oo\left[\max\left\{\frac{p^3\epsilon_n^2}{n^3},\frac{p^2}{n^3}\right\}\right]$. Thus the variance in $A$ of Theorem \ref{RR} can be bounded by 9 times the maximum of the variances of $A_1, A_2, A_3$. Consequently we obtain
\begin{align*}
A=\sum\limits_{i,j=1}^k\lambda^{(i)}\sqrt{\V\left[\E[(W'_i-W_i)(W'_j-W_j)| W]\right]}=\Oo\left[\max\left\{\frac{p^{3/2}\epsilon_n}{n^{1/2}},\frac{p}{\sqrt{n}}\right\}\right]
\end{align*}
and this completes the proof.
\end{proof}
\begin{proof}[Proof of Theorem \ref{THHUM2}]Having seen the proof of Theorem \ref{THHUM} this proof gets very simple. We first note that Theorem \ref{Kolmogorov} can be applied since the regression condition is the same as for Theorem \ref{THHUM}. $A_1$ matches $A$ taken from the same proof and thus $\log(n)A_1=\Oo\left[\log(n)\max\left\{\frac{p^{3/2}\epsilon_n}{n^{1/2}},\frac{p}{n}\right\}\right]$. Using Lemma \ref{EndlMoH} and the estimation of the C-term in \ref{THHUM} we have that the second expression is $\Oo\left[\log(n)\max\left\{\frac{p^{3/2}\epsilon_n}{n^{1/2}},\frac{p}{\sqrt{n}}\right\}\right]$. The same Lemma, $A=\frac{1}{\sqrt{n}}$ and $A_3=\Oo(n)$ yield that the third and fourth expression have the order $\Oo(\log(n)n^{-1/2})$. Thus the theorem is proven.
\end{proof}
\begin{proof}[Proof of Theorem \ref{THHUMFix}]In order to prove the theorem we have to make small adjustments to the proof of Theorem \ref{THHUM}. Using the same techniques as before we arrive at
\begin{align*}
\E\left[W'-W\mid \mathcal{F}\right] = \frac{\beta}{ n} \bigl[ D^2 \Phi (\lambda_l^n(\xi)) \bigr] W + R(W),
\end{align*}
with  $R(W) = (R(1), \ldots, R(p))$, where $R(i)=R_{1,i}+\tilde R_{2,i}$ with $R_{1,i}$ taken from \eqref{R1iH} and
\begin{equation} \label{R2iHf}
\tilde R_{2,i} : =\sum\limits_{l,t=1}^p\Oo\left(\frac{1}{\sqrt{n}}\frac{W_l}{\sqrt{n}}\frac{W_t}{\sqrt{n}}\right).
\end{equation}
This expression is the central difference to the proof of Theorem \ref{THHUM}. Whereas the expression \eqref{R2iH} contained the expression
\begin{align}\label{BoundpFIx}
\sum \limits_{t=k}^p \left( \frac{\partial^2}{\partial \lambda_i\partial \lambda_t} \Phi (\lambda_l^n(\xi))\right)  \, \frac{\beta W_t}{n},
\end{align}
which made us use Lemma \ref{BoundedDerivativesPhi}, \eqref{BoundpFIx} is now part of $\Lambda W$ since $p$ is a constant and we do not need a projection to define $W$. Thus our expression \eqref{R2iHf} contains just the second expression of the right hand side of \eqref{R2iH}. Fortunately this can be estimated using Lemma \ref{EndlMoH}. Thus, without using Lemma \ref{BoundedDerivativesPhi}, the computation of the rate of convergence gets a lot easier. Again it only remains to estimate $A$, $B$ and $C$ taken from Theorem \ref{RR}. We note that $B$ is the same as in Theorem \ref{THHUM}. Thus $B=\Oo(n^{-1/2})$. $R_{1,i}$ is the same as in \eqref{R1iH} and is bounded in the same way as in Theorem \ref{THHUM}. Since $\tilde R_{2,i}$ was part of \eqref{R2iH} and $p$ is fixed we obtain by using Lemma \ref{EndlMoH}
\begin{align}\label{NewBoundRH}
\E|\tilde R_{2,i}|=\Oo(n^{-3/2}).
\end{align}
In comparison to Theorem \ref{THHUM} and the bound in \eqref{BoundRH} we notice that the first part of the maximum is not existent since the expression \eqref{BoundpFIx} is not part of $\tilde R_{2,i}$ and the second part of the maximum is the same as the bound in \eqref{NewBoundRH} with $p$ constant. Using the bound on $R_{1,i}$ and $\tilde R_{2,i}$ we obtain $C=\Oo(n^{-1/2})$. If we split the expectation of the expression $A$ in the same way as in \eqref{AiH} and we note that $A_1$ and $A_2$ are estimated in exact the same way as for the proof of Theorem \ref{THHUM}. Finally we note that for $p$ fixed we can also split $A_3$ as in \eqref{MiH} and that with the same reasons that led to \eqref{NewBoundRH} $\V[M_1]=\V[M_2]=\Oo(n^{-3})$. Hence, $A=\Oo(n^{-1/2})$.
\end{proof}
\begin{proof}[Proof of Theorem \ref{THHMM}]The proof uses the fact that the conditional joint distribution of the $(\sigma_i)_i$, conditioned on the event $\big\{ \left\Vert\frac{S_n}{n}-x^*e_l\right\Vert<\epsilon \bigr\}$, is given by
\begin{align*}
P_{ n,\beta,\xi}( \sigma)=\frac{1}{\tilde Z_{n, \beta,\xi}}\exp\bigl(-\beta H_n(\sigma,\xi)\bigr)\Ei_{B(x^*e_l,\epsilon)}\left(\frac{S_n}{n}\right),
\end{align*}
where $\tilde Z_{n, \beta,\xi}$ denotes a normalization. Thus we are able to follow the lines of the proof of Theorem \ref{THHUM}.
 \end{proof}  

\section{Appendix} 
For the proofs of the theorems for the Hopfield model we need a multivariate {\it second-order Taylor expansion} of $\Phi(\lambda)$ defined in \eqref{PhiTan}. Let us denote by $D^2\Phi(\lambda)$ the Hessian matrix $\{ \partial^2 \Phi(\lambda) / \partial \lambda_i \partial \lambda_j, i,j = 1, \ldots, p\}$ of $\Phi$ at $\lambda$. We obtain
\begin{eqnarray} \Phi(u)& =&   \Phi(\lambda)+\sum\limits_{k=1}^p\frac{\partial}{\partial u_k}\Phi(\lambda)(u_k-\lambda_{k})+\frac{1}{2} \langle (u-\lambda),D^2\Phi(\lambda)\cdot (u-\lambda) \rangle  \nonumber \nonumber\\\label{taylorHop}
&& +\frac{1}{6}\sum\limits_{t,k,j=1}^p \widetilde R_{t,k,j}(u_t-\lambda_{t})(u_k-\lambda_{k})(u_j-\lambda_{j}),
\end{eqnarray}
with $\bigl| \widetilde R_{t,k,j} \bigr| \leq \bigl\Vert \frac{\partial^3}{\partial u_k\partial u_t\partial u_j}\Phi \bigr\Vert$. For any fixed $m\in \{1,\ldots,p\}$ and any $\lambda, u \in \R^p$ it follows that
\begin{eqnarray}
 \frac{\partial}{\partial u_m}\Phi(u) & = & \frac{\partial}{\partial u_m}\Phi(\lambda)+  \sum\limits_{k=1}\frac{\partial^2}{\partial u_k\partial u_m}\Phi(\lambda)(u_k-\lambda_{k})  \nonumber \\
 &&+ \sum\limits_{k,t =1}^p\Oo((u_k-\lambda_{k})(u_t-\lambda_{t})). \label{taylorsecondH}
\end{eqnarray}

\renewcommand{\refname}{References}

\newcommand{\SortNoop}[1]{}\def\cprime{$'$} \def\cprime{$'$}
  \def\polhk#1{\setbox0=\hbox{#1}{\ooalign{\hidewidth
  \lower1.5ex\hbox{`}\hidewidth\crcr\unhbox0}}}
\providecommand{\bysame}{\leavevmode\hbox to3em{\hrulefill}\thinspace}
\providecommand{\MR}{\relax\ifhmode\unskip\space\fi MR }
\providecommand{\MRhref}[2]{%
  \href{http://www.ams.org/mathscinet-getitem?mr=#1}{#2}
}
\providecommand{\href}[2]{#2}

\end{document}